%
%
%
%
\documentclass{amsart}[12pt]
%
%

\newtheorem{theorem}{Theorem}[section]
\newtheorem{lemma}[theorem]{Lemma}
\newtheorem{proposition}[theorem]{Proposition}
\newtheorem{corollary}[theorem]{Corollary}

\theoremstyle{definition}
\newtheorem{definition}[theorem]{Definition}

\newcommand{\End}{\mathop{\mathrm{End}}\nolimits}
\newcommand{\Ad}{\mathop{\mathrm{Ad}}\nolimits}
\newcommand{\ad}{\mathop{\mathrm{ad}}\nolimits}

\newcommand{\spin}{\mathop{\mathrm{spin}}\nolimits}
\newcommand{\Kil}{\mathop{\mathrm{Kil}}\nolimits}

\renewcommand{\div}{\mathop{\mathrm{div}}\nolimits}
\newcommand{\grad}{\mathop{\mathrm{grad}}\nolimits}

\newcommand{\cL}{{\mathcal L}}
\newcommand{\cF}{{\mathcal F}}
\newcommand{\cS}{{\mathcal S}}
\newcommand{\cT}{{\mathcal T}}

\newcommand{\bC}{{\mathbb C}}
\newcommand{\bR}{{\mathbb R}}
\newcommand{\fg}{{\mathfrak g}}
\newcommand{\fk}{{\mathfrak k}}
\newcommand{\ft}{{\mathfrak t}}
\newcommand{\fm}{{\mathfrak m}}

\newcommand{\<}{\langle}
\renewcommand{\>}{\rangle}
\newcommand{\dia}{\diamond}
\renewcommand{\a}{\alpha}
\renewcommand{\b}{\beta}

\renewcommand{\o}{\omega}
\newcommand{\g}{\gamma}
\newcommand{\G}{\Gamma}
\newcommand{\D}{\Delta}
\renewcommand{\k}{\kappa}
\renewcommand{\l}{\lambda}
\renewcommand{\d}{\delta}

\newcommand{\var}{\varphi}

\newcommand{\jd}{J_\diamond}
\newcommand{\zd}{Z_\diamond}

\numberwithin{equation}{section}

\begin{document}

\title[Dirac operators for coadjoint orbits]{Dirac operators for coadjoint orbits of compact Lie groups}

\author{Marc A. Rieffel}
\address{Department of Mathematics\\
University of California\\
Berkeley, CA\ \ 94720-3840}
\curraddr{}
\email{rieffel@math.berkeley.edu}
\thanks{The research reported here was supported in part by 
National Science Foundation Grants DMS-0500501 and DMS-0753228.}
\dedicatory{In celebration of the sixtieth birthday of Joachim Cuntz}


\subjclass[2000]{Primary 53C30; Secondary 46L87, 58J60, 
53C05}
\keywords{Dirac operators, coadjoint orbit, connection, Levi--Civita, almost Hermitian, Clifford algebra, spinor}

\large{

\begin{abstract}
The coadjoint orbits of compact Lie groups carry many K\"ahler structures, which include a Riemannian metric and a complex structure.  We provide a fairly explicit formula for the Levi--Civita connection of the Riemannian metric, and we use the complex structure to give a fairly explicit construction of  
a canonical Dirac operator for the Riemannian metric, in a way that avoids use of the $\spin^c$ groups.  Substantial parts of our results apply to compact almost-Hermitian homogeneous spaces, and to other connections besides the Levi--Civita connection.  For these other connections we give a criterion that is both necessary and sufficient for their Dirac operator to be formally self-adjoint.

We hope to use the detailed results given here to clarify statements in the literature of high-eneregy physics concerning ``Dirac operators'' for matrix algebras that converge to coadjoint orbits.  To facilitate this we employ here only global methods --- we never use local coordinate charts, and we use the cross-section modules of vector bundles.
\end{abstract}

\maketitle

\section*{Introduction}

In the literature of theoretical high-energy physics one finds statements along the lines of ``matrix algebras converge to the sphere'' and ``here are the Dirac operators on the matrix algebras that correspond to the Dirac operator on the sphere''.  But one also finds that at least three inequivalent types of Dirac operator are being used in this context.  See, for example, \cite{AIM,Aok,BIm,BKV,CWW,GP2,HQT,Yd1,Ydr} and the references they contain, as well as \cite{Ngo} which contains some useful comparisons.  In \cite{R6,R7,R21} I provided definitions and theorems that give a precise meaning to the convergence of matrix algebras to spheres.  These results were developed in the general context of coadjoint orbits of compact Lie groups, which is the appropriate context for this topic, as is clear from the physics literature.  I seek now to give a precise meaning to the statement about Dirac operators.  For this purpose it is important to have a detailed understanding of Dirac operators on coadjoint orbits, in a form that is congenial to the non-commutative geometry that is used in treating the matrix algebras.  This means, for example, that one should work with the modules of continuous sections of vector bundles, rather than the points of the bundles themselves, and one should not use local coordinate charts. (Standard module frames are very useful to us in this connection.)  The purpose of this paper is to give such a congenial detailed understanding of Dirac operators on coadjoint orbits.

Let $G$ be a connected compact semisimple Lie group, with Lie algebra $\fg$.  Let $\fg'$ denote the vector-space dual of $\fg$, and let $\mu \in \fg'$ with $\mu \neq 0$.  The coadjoint orbit of $\mu$ can be identified with $G/K$ where $K$ is the stability subgroup of $\mu$.  Then $\mu$ determines a $G$-invariant K\"ahler structure on $G/K$, which includes a Riemannian metric and a complex structure \cite{BFR}. This complex structure determines a canonical $\spin^c$ structure on $G/K$. A principal objective of this paper is to give a reasonably explicit construction of the Dirac operator for this $\spin^c$ structure. Toward this objective we obtain in Section~\ref{sec3} a reasonably specific formula for the Levi--Civita connection for the Riemannian metric determined by $\mu$.  (The only place I have seen this Levi--Civita connection discussed in the literature is in section 7 of \cite{BIL}, where the context is not sufficiently congenial to non-commutative geometry for my purposes.)  Our construction of the Dirac operator, along the lines given in \cite{Ply, GVF, Vrl}, never involves the 
$\spin^c$ groups, with their attendant complications.  We will also consider Dirac operators for $\spin^c$ structures obtained by twisting the canonical one.

We remark that coadjoint orbits are always $\spin^c$ manifolds, but many are not spin manifolds.  See \cite{BIL, Owc, DlN, LwM} for interesting specific examples. But I have not found a description in the literature of exactly which coadjoint orbits are spin. (Though see remark 3.6 of \cite{Goe}.) We will not discuss here the charge conjugation that can be constructed for the Dirac operator coming from a spin structure, but underlying the spin structure on a coadjoint orbit that is spin will be one of the twisted $\spin^c$ structures that we consider, for the reasons indicated by definition 9.8 of \cite{GVF}.

But there are other $G$-invariant metrics of interest on $G/K$, the most obvious one coming from using the Killing form of $\fg$.  This metric will come from the K\"ahler structure on a coadjoint orbit only in the special case that $G/K$ is a symmetric space.  More generally, as is explained well on page~21 of \cite{Dui}, if the Levi--Civita connection for a Riemannian manifold commutes with a complex structure, then the Riemannian metric is part of a K\"ahler structure on the manifold.  But as explained in \cite{BFR}, if $G/K$ has a K\"ahler structure then $G/K$ must correspond to a coadjoint orbit.  The consequence of this is that if we want to treat Riemannian metrics such as that from the Killing form, and if we want to use a complex (or almost-complex) structure to construct the Dirac operator, then we must use connections that are not torsion-free.  But then we must be concerned with whether the corresponding Dirac operator is formally self-adjoint, as is usually desired.

To deal with this more general situation, we develop a substantial part of our results for the more general case in which $G/K$ is almost-Hermitian.  There are many more coset spaces $G/K$ that admit a $G$-invariant almost-Hermitian structure, beyond those that arise from coadjoint orbits.  In Theorem \ref{th7.1} we give a convenient criterion, in terms of the torsion, that is both necessary and sufficient, for the Dirac operator constructed using a connection compatible with a $G$-invariant almost-Hermitian structure, to be formally self-adjoint. Our criterion is very similar to the one given in the main theorem of \cite{Ikd}, which treats the case of homogeneous spaces that are spin. (See also \cite{FrS}.) The criterion in \cite{Ikd} is restated as proposition 3.1 of \cite{Agr}, which again treats homogeneous spaces that are spin, and focuses on ``naturally reductive'' Riemannian metrics. As we will indicate after Theorem 3.3, the metric from the K\"ahler structure of a coadjoint orbit is ``naturally reductive" exactly in the special case when the coadjoint orbit is a symmetric space. Also, our global techniques are different from the techniques of these two papers.

 Among the corollaries of our criterion we prove that for the canonical connection on an almost-Hermitian $G/K$ its Dirac operator is always formally self-adjoint.  In particular, this applies to coadjoint orbits when they are equipped with the Riemannian metric coming from the Killing form. (In this case the canonical connection often has non-zero torsion.)

It would be very interesting to know how the results in the present paper relate to those in \cite{Krh}.  In \cite{Krh} only one ``metric'' on a quantum flag manifold appears to be used, and my guess is that it corresponds to the Killing-form metric, and that the self-adjointness of the Dirac operator relates to our Corollary \ref{cor7.5}.  But I have not studied this matter carefully.  It would also be interesting to study the extent to which the results of the present paper can be extended to the setting of \cite{DlN}, or used in the setting immediately after equation 6.31 of \cite{LPS}.

The present paper builds extensively on the paper \cite{R22}, in which I gave a treatment of equivariant vector bundles, connections, and the Hodge--Dirac operator, for general $G/K$ with $G$ compact, in a form congenial to the framework of non-commutative geometry.  (The most recent arXiv version of \cite{R22} has important corrections and improvements compared to the published version.)

In Section~\ref{sec1} of the present paper we describe at the level of the Lie algebra the K\"ahler structure for a coadjoint orbit.  In Section~\ref{sec2} we obtain a general formula for the Levi--Civita connection for a $G$-invariant Riemannian metric on a coset space $G/K$ for $G$ compact.  In Section~\ref{sec3} we use results from Section~\ref{sec1} together with the general formula of Section~\ref{sec2} to obtain a rather specific formula for the Levi--Civita connection for the Riemannian metric of the K\"ahler structure on a coadjoint orbit.  At no point do we need to use the full structure theory of semisimple Lie algebras --- we only need the non-degeneracy of the Killing form.  In Section~\ref{sec5} we develop, at the level of the Lie algebra, the Clifford algebra and its spinor representation corresponding to the complex structure of an almost-Hermitian coset space; and then in Section~\ref{sec6} we use this to define the field of Clifford algebras, the spinor bundle, and the Dirac operator for an almost-Hermitian coset space $G/K$.  We also obtain there some of the basic properties of the Dirac operator.  Finally, in Section~\ref{sec7} we obtain the criterion mentioned above for when the Dirac operator will be formally self-adjoint, and we apply this criterion to the case of the Riemannian metric from the K\"ahler structure of a coadjoint orbit, and also to the case of the Riemannian metric from the Killing form.

A part of the research for this paper was carried out during a six-week visit I made to Scuola Internazionale Superiore di Studi Avanzati (SISSA) in Trieste, where Dirac vibrations are strong.  I am very appreciative of the stimulating atmosphere there, and of the warm hospitality of Gianni Landi and Ludwik Dabrowski during my visit.

I am very grateful to the referee for detailed comments on the first version of this paper, which in particular led to some important improvements.


\section{The canonical K\"ahler structure}
\label{sec1}

Let $G$ be a connected compact Lie group.  Let $\fg$ be its Lie algebra, and let $\Ad$ be the adjoint action of $G$ on $\fg$.  Let $\fg'$ be the vector-space dual of $\fg$, and let $\Ad'$ be the coadjoint action of $G$ on $\fg'$, that is, the dual of the action $\Ad$.  The coadjoint orbits are the orbits in $\fg'$ for the action $\Ad'$.  Let $\mu_{\dia} \in \fg'$, with $\mu_{\dia} \ne 0$.  We will obtain in this section quite explicit formulas for the restriction to the tangent space at $\mu_{\dia}$ of the canonical K\"ahler structure on the coadjoint orbit through $\mu_{\dia}$.  We will usually mark with a $\dia$ the various pieces of structure that depend canonically on the choice of $\mu_{\dia}$.  In Sections~\ref{sec2} and \ref{sec3} we will see how to construct the K\"ahler structure on the whole coadjoint orbit through $\mu_{\dia}$.  This K\"ahler structure includes a Riemannian metric and a complex structure.  In Section~\ref{sec6} we will construct the Dirac operator for this Riemannian metric on the canonical $\spin^c$ structure determined by the complex structure.

Since the center of $G$ leaves all the points of $\fg'$ fixed, we do not lose generality by assuming that $G$ is semisimple.  We assume this from now on.  But we will see that the only aspect of semisimplicity that we will need is the definiteness of the Killing form.  We do not need the structure theory of semisimple Lie algebras.

For much of the material in this section I have been guided by the contents of \cite{BFR}.  In \cite{BFR} many possibilities are explored.  In contrast, we will here try to take the shortest path to what we need, and we will emphasize the extent to which the structures are canonical.  We will not examine what happens when we choose different $\mu_{\dia}$'s that have the same stability group.  But \cite{BFR} has considerable discussion of this aspect.

Let $K$ denote the $\Ad'$-stability subgroup of $\mu_{\dia}$, so that $x \mapsto \Ad'_x(\mu_{\dia})$ gives a $G$-equivariant diffeomorphism from $G/K$ onto the $\Ad'$-orbit of $\mu_{\dia}$.  We will usually work with $G/K$ rather than the orbit itself.

We let $\Kil$ denote the {\em negative} of the Killing form of $\fg$.  Then $\Kil$ is positive-definite because $G$ is compact.  The action $\Ad$ of $G$ on $\fg$ is by orthogonal operators with respect to $\Kil$, and the action $\ad$ of $\fg$ on $\fg$ is by skew-adjoint operators with respect to $\Kil$.  Because $\Kil$ is definite, there is a $Z_{\dia} \in \fg$ such that 
\begin{equation}
\label{eq1.1}
\mu_{\dia}(X) = \Kil(X,Z_{\dia}) \text{ for all } X \in \fg.
\end{equation}
It is easily seen that the $\Ad$-stability subgroup of $Z_{\dia}$ is again $K$.

Let $T_{\dia}$ be the closure in $G$ of the one-parameter group $r \mapsto \exp(rZ_{\dia})$, so that $T_{\dia}$ is a torus subgroup of $G$.  Then it is easily seen that $K$ consists exactly of all the elements of $G$ that commute with all the elements of $T_{\dia}$.  Note that $T_{\dia}$ is contained in the center of $K$ (but need not coincide with the center).  Since each element of $K$ will lie in a torus subgroup of $G$ that contains $T_{\dia}$, it follows that $K$ is the union of the tori that it contains, and so $K$ is connected (corollary 4.22 of  \cite{Knp}).  Thus for most purposes we can just work with the Lie algebra, $\fk$, of $K$ when convenient.  In particular, $\fk = \{X \in \fg: [X,Z_{\dia}]  = 0\}$, and $\fk$ contains the Lie algebra, $\ft_{\dia}$, of $T_{\dia}$.

Let $\fm = \fk^{\perp}$ with respect to $\Kil$.  Since $\Ad$ preserves $\Kil$, we see that $\fm$ is carried into itself by the restriction of $\Ad$ to $K$.  Thus $[\fk,\fm] \subseteq \fm$.  It is well-known, and explained in \cite{R22}, that $\fm$ can be conveniently identified with the tangent space to $G/K$ at the coset $K$ (which corresponds to the point $\mu_{\dia}$ of the coadjoint orbit).  We will review this in the next section.  Here we concentrate on the structures on $\fm$ that will give the K\"ahler structure on $G/K$.

The K\"ahler structure includes a symplectic form $\o_{\dia}$.  This is the Kirillov--Kostant--Souriau form, defined initially on $\fg$ by
\begin{equation}
\label{eq1.2}
\o_{\dia}(X,Y) = \mu_{\dia}([X,Y]) = \Kil([X,Y],Z_{\dia}) = \Kil(Y,[Z_{\dia}, X]).
\end{equation}
Because $Z_{\dia}$ is in the center of $\fk$, we see that if $X \in \fk$ then $\o_{\dia}(X,Y)  = 0$ for all $Y \in \fg$.  Conversely, if $X \in \fg$ and if $\o_{\dia}(X,Y) = 0$ for all $Y \in \fg$, then, because $\Kil$ is non-degenerate, we have $[X,Z_{\dia}] = 0$, so that $X \in \fk$.  Thus $\o_{\dia}$ ``lives'' on $\fm$ and is non-degenerate there.  Because $\Ad$ preserves $\Kil$ and $K$ stabilizes $Z_{\dia}$, it is easily seen that the restriction of $\Ad$ to $K$ preserves $\o_{\dia}$, that is,
\[
\o_{\dia}(\Ad_s(X),\Ad_s(Y)) = \o_{\dia}(X,Y)
\]
for all $X,Y \in \fm$ and $s \in K$.

We now follow the proof of proposition~12.3 of \cite{CdS} in order to construct a complex structure on $\fm$.  (I am grateful to Xiang Tang for bringing this proposition to my attention.  My original, somewhat longer, approach at this point was to begin working in the complexification of $\fg$ and $\fm$, as done in\cite{BFR}.)  See also the proof of theorem 1.36 of \cite{Brn} and the middle of the second proof of proposition~2.48i of \cite{McS}.  Because $\Kil$ is non-degenerate, there is a unique linear operator, $\G_{\dia}$, on $\fm$ such that
\begin{equation}
\label{eq1.3}
\o_{\dia}(X,Y) = \Kil(\G_{\dia}X,Y)
\end{equation}
for all $X,Y \in \fm$.  From equation \ref{eq1.2} we see that $\G_\dia$ is $\ad_{\zd}$ restricted to $\fm$, and so $\G_{\dia}$  is skew-symmetric, that is, $\G_{\dia}^* = -\G_{\dia}$.  Because $\o_{\dia}$ is non-degenerate, $\G_{\dia}$ is invertible.  Because $Z_{\dia}$ is in the center of $\fk$,  the $\Ad$-action of $K$ commutes with $\G_{\dia}$. Let $\G_{\dia} = |\G_{\dia}|J_{\dia}$ be the polar decomposition of $\G_{\dia}$.  Since $\G_{\dia}$ is invertible, so are $|\G_{\dia}|$ and $J_{\dia}$, and thus $J_{\dia}$ is an orthogonal transformation with respect to $\Kil$.  Because $\G_{\dia}$ is skew-symmetric, so is $J_{\dia}$, so that $J_{\dia}^{-1} = J_{\dia}^* = -J_{\dia}$, and $J_{\dia}$ commutes with $|\G_{\dia}|$.  In particular, $J_{\dia}^2 = -I$, where $I$ denotes the identity operator on $\fm$.  This means exactly that $J_{\dia}$ is a complex structure on $\fm$, preserved by the $\Ad$-action of $K$.

The final piece of structure is a corresponding inner product, $g_{\dia}$, on $\fm$, defined by
\[
g_{\dia}(X,Y) = \o_{\dia}(X,J_{\dia}Y) = \Kil(\G_{\dia}X,J_{\dia}Y) = \Kil(|\G_{\dia}|X,Y).
\]
Clearly $g_{\dia}$ is positive-definite, and is preserved by the $\Ad$-action of $K$.  It is $g_{\dia}$ that will give the Riemannian metric whose Dirac operator we will construct.  The complex structure $J_{\dia}$ will enable us to avoid the use of $\spin^c$ groups when constructing the Dirac operator.

But first we need to obtain a reasonably explicit expression for the Levi--Civita connection for the Riemannian metric corresponding to $g_{\dia}$.  For this purpose we need to examine the $\Ad$-action of $T_{\dia}$ on $\fm$.  By means of $J_{\dia}$ we make $\fm$ into a $\bC$-vector-space, by defining $iX$ to be just $J_{\dia}X$ for $X \in \fm$.  When we view $\fm$ as a $\bC$-vector-space in this way we will denote it by $\fm_{J_{\dia}}$.  Since the $\Ad$-action of $K$ (and thus of $T_{\dia}$) on $\fm$ commutes with $J_{\dia}$, this action respects the $\bC$-vector-space structure.  We define a $\bC$-sesquilinear inner product, $\Kil_{\dia}^{\bC}$, on $\fm$ by
\[
\Kil_{\dia}^{\bC}(X,Y) = \Kil(X,Y) + i\Kil(J_{\dia}X,Y).
\]
It is linear in the second variable.  (We follow the conventions in definition~5.6 of \cite{GVF}.) The $\Ad$-action of $K$ on $\fm_{J_{\dia}}$ is unitary for this inner product.  The $\Ad$-action of $T_{\dia}$ on $\fm_{J_{\dia}}$ then decomposes into a direct sum of one-dimensional complex representations of $T_{\dia}$, whose corresponding representations of $\ft_{\dia}$ are given by real-linear functions on $\ft_{\dia}$ whose values are pure-imaginary (the ``weights'' of the $\ad$-action).  We let $\D_\dia$ be the set of real-valued linear functionals $\a$ on $\ft_{\dia}$ such that $i\a$ is a weight of the $\ad$-action.  It will be convenient for us to set, for each real-linear real-valued functional $\a$ on $\ft_\dia$,
\[
\fm_{\a} = \{X \in \fm_{J_{\dia}}: \ad_Z(X) = i\a(Z)X = \a(Z)J_\dia X \ \  \text{ for all } Z \in \ft_{\dia}\}.
\]
Thus $\fm_{\a} = \{0\}$ exactly when $\a \notin \D_{\dia}$.  For any $X \in \fm_{\a}$ and $Y \in \fm_{J_{\dia}}$ we see from equation \ref{eq1.2} that
\[
\begin{aligned}
g_{\dia}(X,Y) &=  \o_{\dia}(X,J_{\dia}Y)  = \Kil([\zd, X], J_{\dia}Y)  \\
& =\Kil(\a(\zd)\jd X, \jd Y) = \a(Z_{\dia})\Kil(X,Y).
\end{aligned}
\]
Thus for $\a \in \D_{\dia}$ and $X \in \fm_{\a}$ with $X \ne 0$ we have
\[
0 < g_{\dia}(X,X) = \a(Z_{\dia})\Kil(X,X),
\]
and so $\a(Z_{\dia}) > 0$.  Thus in terms of the above notation we see that we obtain the following attractive description of $|\G_{\dia}|$:

\setcounter{theorem}{3}
\begin{proposition}
\label{prop1.4}
For each $\a \in \D_{\dia}$ the restriction of $|\G_{\dia}|$ to $\fm_{\a}$ is $\a(Z_{\dia})I_{\fm_{\a}}$, where $I_{\fm_{\a}}$ is the identity operator on $\fm_{\a}$.  In particular, $\a(\zd) > 0$, and
on $\fm_{\a}$ we have $g_{\dia} = \a(Z_{\dia})\Kil$.  If $P_{\a}$ denotes the orthogonal projection of $\fm$ onto $\fm_{\a}$, then $|\G_{\dia}| = \sum_{\a \in \D_{\dia}} \a(Z_{\dia})P_{\a}$.
\end{proposition}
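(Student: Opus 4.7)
The proposition essentially repackages the computation carried out in the paragraph immediately preceding it, so my plan is to turn those calculations into an identification of $|\G_\dia|$ via uniqueness of the polar decomposition.

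First I would observe that, as already noted after equation~\ref{eq1.3}, $\G_\dia$ is just $\ad_{\zd}$ restricted to $\fm$. Hence for $X \in \fm_\a$ the defining relation of $\fm_\a$ gives
\[
\G_\dia X \;=\; \ad_{\zd}(X) \;=\; \a(\zd)\,\jd X.
\]
Meanwhile the polar decomposition reads $\G_\dia = |\G_\dia|\jd$, so on $\fm_\a$ we have $|\G_\dia|\jd X = \a(\zd)\jd X$. Because $\jd$ is invertible on $\fm$, it carries $\fm_\a$ onto $\fm_\a$ (it acts there as ``multiplication by $i$'' in the complex structure $\fm_{\jd}$), so applying $\jd^{-1}$ and using that $|\G_\dia|$ commutes with $\jd$ (as was noted when the polar decomposition was introduced) yields $|\G_\dia|X = \a(\zd) X$ on $\fm_\a$.

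Next I would record the positivity $\a(\zd) > 0$: this is already derived in the displayed calculation just before the proposition, where one uses positive-definiteness of both $g_\dia$ and $\Kil$ on $\fm$ together with the identity $g_\dia(X,X) = \a(\zd)\Kil(X,X)$ for $X \in \fm_\a$. (One could alternatively invoke positivity of $|\G_\dia|$ directly from the polar decomposition of an invertible operator.) The formula $g_\dia = \a(\zd)\Kil$ on $\fm_\a$ is then immediate from $g_\dia(X,Y) = \Kil(|\G_\dia|X,Y)$.

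Finally, to obtain the spectral sum $|\G_\dia| = \sum_{\a \in \D_\dia} \a(\zd)P_\a$, I would note that $\fm_{\jd}$ decomposes as the orthogonal direct sum $\bigoplus_{\a \in \D_\dia}\fm_\a$ under the action of the torus $T_\dia$ (this is just the weight decomposition of a unitary representation of a compact abelian group, already invoked in the paragraph preceding the proposition). Orthogonality with respect to $\Kil$ holds because distinct weight spaces of a $\Kil$-orthogonal $T_\dia$-action are $\Kil$-orthogonal, and it transfers to $g_\dia$ by the previous step. Summing the pointwise identities on each $\fm_\a$ then gives the stated expression for $|\G_\dia|$.

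There is no real obstacle here; the only point that needs a moment's care is the invocation of uniqueness of polar decomposition, which I bypass by solving directly for $|\G_\dia|$ on $\fm_\a$ using invertibility of $\jd$ and the commutation $|\G_\dia|\jd = \jd|\G_\dia|$ already established.
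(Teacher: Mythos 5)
Your proof is correct and follows essentially the same line as the paper, which establishes the proposition via the computations in the paragraph immediately preceding it. The only cosmetic difference is that you extract the eigenvalue identity $|\G_\dia|X=\a(\zd)X$ directly from $\G_\dia=\ad_{\zd}|_{\fm}=|\G_\dia|\jd$ together with the commutation of $\jd$ with $|\G_\dia|$, whereas the paper reaches the same conclusion implicitly through $g_\dia(X,Y)=\a(\zd)\Kil(X,Y)=\Kil(|\G_\dia|X,Y)$ and non-degeneracy of $\Kil$; both routes are valid and of the same length.
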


Note that this proposition shows how strongly dependent $g_{\dia}$ is on the choice of $\mu_{\dia}$.  In contrast, different $\mu_{\dia}$'s that give $Z_{\dia}$'s that generate the same group $T_{\dia}$ may have the same subspaces $\fm_{\a}$.


\section{Levi--Civita connections \\ for invariant Riemannian metrics on $G/K$}
\label{sec2}

In this section we assume as before that $G$ is a connected compact semisimple Lie group, but we only assume that $K$ is a closed subgroup of $G$, not necessarily connected.  We will assume that we have an inner-product, $g_0$, on $\fm$ that is invariant under the $\Ad$-action of $K$.  We do not assume that $g_0$ is the restriction to $\fm$ of an $\Ad$-invariant inner product on $\fg$, as was assumed in \cite{R22}.  We will see shortly that much as in \cite{R22}, $g_0$ determines a $G$-invariant Riemannian metric on $G/K$.  We seek a formula for the Levi--Civita connection for this metric.  On $\fm$ there is a positive (for $\Kil$) invertible operator, $S$, such that $g_0(X,Y) = \Kil(SX,Y)$ for all $X,Y \in \fm$.  (So $S$ for a coadjoint orbit is the $|\G_0|$ of the previous section.) Note that $S$ commutes with the $\Ad$-action of $K$. Our formula will be expressed in terms of $S$.  In Section~\ref{sec3} we will use this formula to obtain a more precise formula for the Levi--Civita connection for a coadjoint orbit.  Toward the end of this section we will also discuss the divergence theorem for vector fields on $G/K$.  We need this for our discussion of the formal self-adjointness of Dirac operators in Section~\ref{sec7}.

As in \cite{R22}, we work with the module of tangent vector fields.  For brevity we will at times refer to such ``induced'' modules as ``bundles''.  We recall the setting here.  We let $A = C_{\bR}^{\infty}(G/K)$, which we often view as a subalgebra of $C_{\bR}^{\infty}(G)$.  The tangent bundle of $G/K$ is
\[
\cT(G/K) = \{V \in C^{\infty}(G,\fm): V(xs) = \Ad_s^{-1}(V(x)) \text{ for } x \in G, s \in K\}.
\]
It is an $A$-module for the pointwise product, and $G$ acts on it by translation.  We denote this translation action by $\l$.  Each $V \in \cT(G/K)$ determines a derivation, $\d_V$, of $A$ by
\[
(\d_Vf)(x) = D_0^t(f(x \exp(tV(x))),
\]
where $D_0^t$ means ``derivative in $t$ at $t = 0$''.  On $\cT(G/K)$ we have the canonical connection, $\nabla^c$, defined by
\begin{equation}
\label{eq2.1}
(\nabla_V^c(W))(x) = D_0^t(W(x \exp(tV(x)))
\end{equation}
for $V, \ W \in \cT(G/K)$.  It is not in general torsion-free.  Associated to it is the ``natural torsion-free'' \cite{KbN} connection, $\nabla^{ct}$, that is given (e.g. in theorem~6.1 of \cite{R22}) by
\[
\nabla^{ct} = \nabla^c + L^{ct},
\]
where $L_V^{ct}$ for any $V \in \cT(G/K)$ is the $A$-module endomorphism of $\cT(G/K)$ defined by
\begin{equation}
\label{eq2.2}
(L^{ct}_V W)(x) = (1/2)P[V(x),W(x)],
\end{equation}
where $P$ is the projection of $\fg$ onto $\fm$ along $\fk$.  Then $\nabla^{ct}$ is the Levi--Civita connection for the case in which $g_0$ is the restriction of $\Kil$ to $\fm$.  Both $\nabla^c$ and $\nabla^{ct}$ are $G$-invariant in the sense suitable for connections \cite{R22}.

Our given inner product $g_0$ determines a Riemannian metric on $G/K$, also denoted by $g_0$, defined by
\[
(g_0(V,W))(x) = g_0(V(x),W(x))
\]
for all $V,W \in \cT(G/K)$ and $x \in G$.  Thus $g_0(V,W) \in A$. When there is no ambiguity about the choice of $g_0$ we will often write $\<V, W\>_A$ instead of $g_0(V,W) \in A$. This Riemannian metric is $G$-invariant (and every $G$-invariant Riemannian metric arises in this way).  We seek to adjust $\nabla^{ct}$ to obtain the Levi--Civita connection, $\nabla^0$, for $g_0$.  A convenient method for doing this is given by theorem~X.3.3 of \cite{KbN} (or equation~13.1 of \cite{Nmz}, where there is a sign error).  We seek $\nabla^0$ in the form $\nabla^{ct} + L^S$, where $L^S$ is an $A$-linear map from $\cT(G/K)$ into the $A$-endomorphisms of $\cT(G/K)$.  We require that $L^S$ be symmetric, that is that $L_W^SV = L_V^SW$ for all $V,W \in \cT(G)$, since this ensures that $\nabla^0$ is torsion free, because $\nabla^{ct}$ is.  As seen in \cite{R22}, by translation invariance we can calculate at $x = e$, the identity element of $G$.  Then according to theorem~X.3.3 of \cite{KbN} we are to determine the symmetric bilinear form $\Phi$ on $\fm$ that satisfies the equation
\begin{equation}
\label{eq2.3}
2g_0(\Phi(X,Y),Z) = g_0(X,P[Z,Y]) + g_0(P[Z,X],Y)
\end{equation}
for all $X,Y,Z \in \fm$. For the reader's convenience we recall the reasoning.  For $x = e$ we have $L_X^{ct}(Y) = (1/2)P[X,Y]$ for $X,Y \in \fm$.  Set $L^S$ on $\fm$ to be $L_X^S(Y) = \Phi(X,Y)$.  Then the above equation becomes
\[
g_0(L_X^SY,Z) = g_0(X,L_Z^{ct}Y) + g_0(L_Z^{ct}X,Y).
\]
When we add to this equation its cyclic permutation
\[
g_0(L_Z^SX,Y) = g_0(Z,L_Y^SX) + g_0(L_Y^S(Z),X)
\]
and use the symmetry of $g_0$ and $\Phi$ and the fact that $L_Z^{ct}Y = -L_Y^{ct}Z$, we obtain
\[
g_0(L^S_XY,Z) + g_0(Y,L_X^SZ) = -g_0(L_X^{ct}Y,Z) - g_0(Y,L_X^{ct}Z).
\]
This says exactly that the operator $L_X^{ct} + L_X^S$ on $\fm$ is skew-symmetric with respect to $g_0$.  This implies that when $L^S$ is extended to $\cT(G/K)$ by $G$-invariance (in the sense that $\l_x(L_V^SW) = L_{\l_xV}^S\l_xW$ as discussed in section~5 of \cite{R22}) the connection $\nabla^{ct} + L^S$ is compatible with the Riemannian metric $g_0$ (as seen, for example, from corollary~5.2 of \cite{R22}).  This connection is also torsion-free, and thus it is the Levi--Civita connection for $g_0$.

When we rewrite equation \ref{eq2.3} in terms of $\Kil$ and $S$ we obtain
\begin{align*}
2\Kil(S\Phi(X,Y),Z) &= \Kil(SX,P[Z,Y]) + \Kil(P[Z,X],SY) \\
&= \Kil([Y,SX],Z) + \Kil(Z,[X,SY]).
\end{align*}
Since this must hold for all $Z$, we see that
\[
L_X^SY = \Phi^0(X,Y) = (1/2)S^{-1}P([X,SY] + [Y,SX]).
\]
By $G$-invariance as above
\begin{equation}
\label{eq2.4}
(L_V^SW)(x) = (1/2)S^{-1}P([V(x),SW(x)] + [W(x),SV(x)])
\end{equation}
for $V,W \in \cT(G/K)$ and $x \in G$.  We thus obtain: 

\setcounter{theorem}{4}
\begin{theorem}
\label{th2.5}
The Levi--Civita connection for the Riemannian metric $g_0$ is $\nabla^0 = \nabla^{ct} + L^S$ where $L^S$ is defined by \eqref{eq2.4} and $S$ relates $g_0$ to $\Kil$ as above.
\end{theorem}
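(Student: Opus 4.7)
My plan is to build $\nabla^0$ as a correction $\nabla^{ct} + L^S$ of the natural torsion-free connection, where $L^S$ is the $G$-invariant extension of a symmetric $\bR$-bilinear form $\Phi$ on $\fm$. Symmetry of $\Phi$ guarantees that $L^S$ is a symmetric $(1,2)$-tensor, so $\nabla^{ct} + L^S$ remains torsion-free. The only remaining requirement for this to be the Levi--Civita connection is compatibility with $g_0$, so the whole proof reduces to solving for $\Phi$ the compatibility equation, and then rewriting the answer in terms of $\Kil$ and $S$.

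First I would work at the identity $e \in G$, where $L_X^{ct}Y = (1/2)P[X,Y]$ for $X,Y \in \fm$. Setting $L_X^S Y := \Phi(X,Y)$, I would invoke theorem X.3.3 of \cite{KbN} (or the Koszul formula in the homogeneous setting) to obtain the identity
\[
2 g_0(\Phi(X,Y),Z) = g_0(X,P[Z,Y]) + g_0(P[Z,X],Y)
\]
for all $X,Y,Z \in \fm$, which is exactly \eqref{eq2.3}. The standard cyclic-permutation trick, combined with the symmetry of $g_0$ and $\Phi$ and the skew-symmetry $L^{ct}_Z Y = -L^{ct}_Y Z$, then forces $L_X^{ct} + L_X^S$ to be $g_0$-skew-symmetric on $\fm$. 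By the $G$-invariant extension discussion recalled from section~5 of \cite{R22} and corollary~5.2 there, this skew-symmetry is exactly the metric-compatibility condition, so $\nabla^{ct} + L^S$ is metric-compatible and torsion-free, hence Levi--Civita.

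Next I would rewrite \eqref{eq2.3} via $g_0(X,Y) = \Kil(SX,Y)$ and the skew-adjointness of $\ad$ with respect to $\Kil$, to obtain
\[
2\Kil(S\Phi(X,Y),Z) = \Kil([Y,SX] + [X,SY],Z).
\]
Since this holds for all $Z \in \fm$ and $S$ is invertible on $\fm$, I may solve for $\Phi$ and get $\Phi(X,Y) = (1/2) S^{-1} P([X,SY] + [Y,SX])$. Extending by $G$-invariance gives formula \eqref{eq2.4}, completing the identification $\nabla^0 = \nabla^{ct} + L^S$.

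The main subtlety --- and what I expect to require the most care --- is the $G$-invariant extension step: one must check that the pointwise formula \eqref{eq2.4} genuinely defines an $A$-linear, $G$-equivariant endomorphism of $\cT(G/K)$, and that the skew-symmetry at $e$ really does translate into metric compatibility of the globally-defined connection. This amounts to verifying that $S$ (and hence $L^S$) commutes with the $\Ad$-action of $K$, so that the formula is consistent with the equivariance condition $V(xs) = \Ad_s^{-1} V(x)$ defining $\cT(G/K)$, and then invoking corollary~5.2 of \cite{R22} to pass from infinitesimal compatibility at $e$ to compatibility as Riemannian metrics; both are routine once one remembers that $g_0$ is $\Ad|_K$-invariant.
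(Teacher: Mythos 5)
Your proposal reproduces the paper's argument essentially verbatim: work at the identity, set up equation~\eqref{eq2.3} via theorem X.3.3 of \cite{KbN}, derive $g_0$-skew-symmetry of $L^{ct}_X + L^S_X$ by the cyclic-permutation trick, rewrite via $g_0 = \Kil(S\cdot,\cdot)$ and $\ad$-skew-adjointness to solve for $\Phi$, and extend by $G$-invariance. The closing remarks about $\Ad_K$-equivariance of $S$ and appealing to corollary~5.2 of \cite{R22} are also the paper's route, so this is the same proof.
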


Let $\D$ denote the set of eigenvalues of $S$, and for each $\a \in \D$ let $\fm_{\a}$ denote the corresponding eigensubspace.  For $\a,\b \in \D$ and $X \in \fm_{\a}$, $Y \in \fm_{\b}$ we see that
\begin{equation}
\label{eq2.6}
L_X^SY = (1/2)S^{-1}P([X,\b Y] + [Y,\a X])= (1/2)(\b-\a)S^{-1}P[X,Y],
\end{equation}
and thus the complication in getting a more precise formula lies in expressing $S^{-1}P[X,Y]$ in terms of the eigensubspaces of $S$.  In Section~\ref{sec3} we will see how to obtain such a more precise formula for the case of coadjoint orbits.

But first we derive here a form of the divergence theorem for our vector fields, because we will need it in Section~\ref{sec7}, and equation \eqref{eq2.4} is important for its proof.  We recall from \cite{R22} that by a standard module frame for $\cT(G/K)$ with respect to the Riemannian metric $g_0$ we mean a finite collection $\{W_j\}$ of elements of $\cT(G/K)$ that have the reproducing property
\[
V = \sum W_j\<W_j, \ V\>_A
\]
for all $V \in \cT(G/K)$.  (We view $\cT(G/K)$ as a right $A$-module, following the conventions in \cite{GVF}.)

\setcounter{theorem}{6}
\begin{definition}
\label{def2.7}
Let $\nabla^0$ be the Levi--Civita connection for the Riemannian metric $g_0$ on $G/K$.  We define the {\em divergence}, $\div(V)$, of an element $V \in \cT(G/K)$, with respect to $g_0$, to be
\setcounter{equation}{7}
\begin{equation}
\label{eq2.8}
\div(V) = \sum_j g_0(\nabla_{W_j}^0V,W_j),
\end{equation}
where $\{W_j\}$ is a standard module frame for $\cT(G/K)$.
\end{definition}

It is not difficult to check that this definition coincides with the usual definition of the divergence in terms of differential forms, but we do not need this fact here.  We should make sure that our definition is independent of the choice of the frame $\{W_j\}$.  To prove our divergence theorem we actually need a slightly more general form of frames, so we give the independence argument in terms of these.  The argument is essentially well-known.

\setcounter{theorem}{8}
\begin{proposition}
\label{prop2.9}
Let $A$ be a commutative ring and let $E$ be an $A$-module that is equipped with an $A$-valued symmetric bilinear form $\<\cdot,\cdot\>_A$.  Assume that there exist biframes for $E$ with respect to this bilinear form, that is, there are finite sets $\{(W_j,{\tilde W}_j)\}$ of pairs of elements of $E$ such that $V = \sum W_j\<{\tilde W}_j,V\>_A$ for every $V \in E$.  Then for any $A$-bilinear form $\b$ on $E$, not necessarily symmetric, with values in some $A$-module, the sum $\sum_j \b(W_j,{\tilde W}_j)$ is independent of the choice of biframe.
\end{proposition}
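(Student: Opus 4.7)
The plan is to prove a slightly stronger \emph{transfer identity} that relates the sum for one biframe to a swapped version of the sum for another biframe, and then to deduce the desired equality from this identity together with its specialization to a single biframe.

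Specifically, given two biframes $\{(W_j, \tilde W_j)\}_j$ and $\{(V_k, \tilde V_k)\}_k$, I would first establish
\begin{equation}\label{eq:transfer-biframe}
\sum_j \beta(W_j, \tilde W_j) \;=\; \sum_k \beta(\tilde V_k, V_k).
\end{equation}
The computation proceeds by expanding $\tilde W_j = \sum_k V_k \langle \tilde V_k, \tilde W_j\rangle_A$ in the second argument of $\beta$ (using the $V$-biframe reproducing property applied to the element $\tilde W_j \in E$), then invoking the symmetry of the bilinear form to replace $\langle \tilde V_k, \tilde W_j\rangle_A$ by $\langle \tilde W_j, \tilde V_k\rangle_A$, then using $A$-bilinearity of $\beta$ to transfer the scalar factor into the first argument, and finally applying the $W$-biframe reproducing property to the element $\tilde V_k \in E$ to contract $\sum_j W_j \langle \tilde W_j, \tilde V_k\rangle_A$ into $\tilde V_k$. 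This chain of rewrites produces exactly the right-hand side of \eqref{eq:transfer-biframe}.

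Next I would specialize \eqref{eq:transfer-biframe} to the case when the two biframes coincide, which yields the \emph{swap identity}
\[
\sum_j \beta(W_j, \tilde W_j) \;=\; \sum_j \beta(\tilde W_j, W_j),
\]
valid for every biframe and every $A$-bilinear form $\beta$. Applying this swap identity to the $V$-biframe gives $\sum_k \beta(\tilde V_k, V_k) = \sum_k \beta(V_k, \tilde V_k)$, and combining with \eqref{eq:transfer-biframe} produces the desired equality $\sum_j \beta(W_j, \tilde W_j) = \sum_k \beta(V_k, \tilde V_k)$.

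The one subtle point, and the main obstacle to a direct one-line proof, is that the one-sided reproducing condition $V = \sum_j W_j \langle \tilde W_j, V\rangle_A$ does not a priori entail its mirror form $V = \sum_j \tilde W_j \langle W_j, V\rangle_A$. Consequently, a naive manipulation only lands at $\sum_k \beta(\tilde V_k, V_k)$ rather than at $\sum_k \beta(V_k, \tilde V_k)$, and the asymmetry must be corrected by the swap identity. Once \eqref{eq:transfer-biframe} is established, however, the swap identity follows from it for free by taking both biframes to be equal, and the proposition closes up. Every remaining step is routine bookkeeping that uses only the symmetry of $\langle \cdot, \cdot\rangle_A$, the $A$-bilinearity of $\beta$, and the biframe reproducing property.
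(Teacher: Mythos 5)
Your proof is correct, and the subtle point you flag --- that the one-sided reproducing property $V = \sum_j W_j\<\tilde W_j,V\>_A$ does not automatically yield its mirror form --- is genuine and handled properly. The paper's proof is organized differently: it expands \emph{both} arguments $W_j$ and $\tilde W_j$ in terms of the new biframe, giving a triple sum $\sum_{j,k,l}\b(U_k,U_l)\<\tilde U_k,W_j\>_A\<\tilde U_l,\tilde W_j\>_A$, and then contracts twice in one chain --- once over $j$ (using symmetry plus the $W$-biframe reproducing property to produce $\<\tilde U_k,\tilde U_l\>_A$) and once over $l$ (using symmetry plus the $U$-biframe reproducing property) --- arriving directly at $\sum_k\b(U_k,\tilde U_k)$ in the correct orientation, with no auxiliary lemma. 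Your route expands only the second argument, which costs less algebra per step but lands you at the swapped form $\sum_k\b(\tilde V_k,V_k)$; you then repair the asymmetry by specializing your transfer identity to a single biframe to obtain the swap identity $\sum_j\b(W_j,\tilde W_j)=\sum_j\b(\tilde W_j,W_j)$. Both proofs use precisely the same three ingredients (bilinearity of $\b$, symmetry of $\<\cdot,\cdot\>_A$, and the one-sided reproducing property); the paper's version is tighter, while yours makes the swap identity an explicit, reusable statement, which is a small clarifying bonus.
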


\begin{proof}
Let $\{(U_k,{\tilde U}_k)\}$ be another biframe.  Then
\begin{align*}
\sum_j \b(W_j,{\tilde W}_j) &= \sum_j \sum_{k,l} \b(U_k\<{\tilde U}_k,W_j\>_A,\ U_l\<{\tilde U}_l,{\tilde W}_j\>_A) \\
&= \sum_{k,l} \b(U_k,U_l) \sum_j \<{\tilde U}_k,W_j\<{\tilde W}_j,{\tilde U}_l\>_A\>_A \\
&= \sum_k \b(U_k, \sum_lU_l\<{\tilde U}_l,{\tilde U}_k\>_A) = \sum_k \b(U_k,{\tilde U}_k).
\end{align*}
This proof can be made more conceptual by noting that $\<\cdot,\cdot\>_A$ establishes an isomorphism of $E\otimes_A E$ with $End_A(E)$.
\end{proof}

Our greater generality is needed because we want to use frames that involve the fundamental vector fields ${\hat X}$, for $X \in \fg$, that correspond to the action of $G$ by translation on $G/K$.  As shown in section~4 of \cite{R22}, they are given by
\[
{\hat X}(x) = -P\Ad_x^{-1}(X).
\]
It is also shown in section~4 of \cite{R22} that if $\{X_j\}$ is an orthonormal basis for $\fg$ for $\Kil$, then $\{{\hat X}_j\}$ is a standard module frame for the Riemannian metric on $G/K$ coming from restricting $\Kil$ to $\fm$.  Thus for any $V \in \cT(G/K)$ we have
\[
V = \sum_j {\hat X}_j\Kil({\hat X}_j,V) = \sum {\hat X}_j g_0(S^{-1}{\hat X}_j,V).
\]
From this we see that the collection $\{({\hat X}_j,S^{-1}{\hat X}_j)\}$ is a biframe for $\cT(G/K)$ when $\cT(G/K)$ is equipped with $g_0$. On $G/K$ we use the $G$-invariant measure coming from a choice of Haar measure on $G$.

\begin{theorem}
\label{th2.10}
Let $g_0$ be a $G$-invariant Riemannian metric on $G/K$ and let $\div(V)$ be defined as above for $g_0$.  Then for any $V \in \cT(G/K)$ we have
\[
\int_{G/K} \div(V) = 0.
\]
\end{theorem}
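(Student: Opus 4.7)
The plan is to evaluate $\div(V)$ using a carefully chosen biframe and then reduce to the vanishing of integrals $\int_{G/K}\delta_W F$ for fundamental vector fields $W$. Take a $\Kil$-orthonormal basis $\{X_j\}$ of $\fg$; then $\{({\hat X}_j, S^{-1}{\hat X}_j)\}$ is a biframe for $\cT(G/K)$ with respect to $g_0$, as noted just before the theorem. Applying Proposition~\ref{prop2.9} to the $A$-bilinear form $\Phi(W, {\tilde W}) = g_0(\nabla^0_W V, {\tilde W})$, and using the identity $g_0(Y, S^{-1}Z) = \Kil(SY, S^{-1}Z) = \Kil(Y, Z)$ that follows from the $\Kil$-symmetry of $S$, one obtains
\[
\div(V) = \sum_j \Kil(\nabla^0_{{\hat X}_j} V, {\hat X}_j).
\]

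Next, decompose $\nabla^0 = \nabla^c + L^{ct} + L^S$ and show that the $L^{ct}$ and $L^S$ contributions each vanish pointwise. The $L^{ct}$ term, after dropping the harmless projection $P$ (since ${\hat X}_j(x) \in \fm$), becomes $(1/2)\sum_j \Kil([{\hat X}_j(x), V(x)], {\hat X}_j(x))$, which is zero by skew-adjointness of $\ad$. For $L^S$, use the $\Kil$-symmetry of $S^{-1}$ to bring $S^{-1}$ outside the bracket, then apply the tight-frame identity $\sum_j \Psi({\hat X}_j(x), {\hat X}_j(x)) = \sum_\beta \Psi(f_\beta, f_\beta)$, valid for any bilinear $\Psi$ and any $\Kil$-orthonormal basis $\{f_\beta\}$ of $\fm$. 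Choosing $\{f_\beta\}$ to be an $S$-eigenbasis makes the scalar eigenvalue factors cancel in pairs, and each of the two resulting sums takes the shape $\sum_\beta \Kil([\,\cdot\,, f_\beta], f_\beta)$, which again vanishes by skew-adjointness of $\ad$.

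We are now reduced to showing $\int_{G/K}\sum_j \Kil(\nabla^c_{{\hat X}_j} V, {\hat X}_j) = 0$. Since $\nabla^c$ is compatible with the translation-invariant form $\Kil$, we have the product rule $\delta_{{\hat X}_j}\Kil(V, {\hat X}_j) = \Kil(\nabla^c_{{\hat X}_j} V, {\hat X}_j) + \Kil(V, \nabla^c_{{\hat X}_j}{\hat X}_j)$. Differentiating ${\hat X}_j(y) = -P\Ad_y^{-1}(X_j)$ along $y = x\exp(t{\hat X}_j(x))$ gives $(\nabla^c_{{\hat X}_j}{\hat X}_j)(x) = [(I-P)\Ad_x^{-1}(X_j),\, P\Ad_x^{-1}(X_j)]$. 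Now $\{\Ad_x^{-1}(X_j)\}_j$ is still a $\Kil$-orthonormal basis of $\fg$, so the element $\sum_j \Ad_x^{-1}(X_j)\otimes \Ad_x^{-1}(X_j)$ of $\fg\otimes\fg$ is basis-independent; writing it in a basis adapted to $\fg = \fk\oplus\fm$, the bilinear map $(Z_1, Z_2)\mapsto [(I-P)Z_1, PZ_2]$ annihilates both the $\fk\otimes\fk$ and $\fm\otimes\fm$ diagonal contributions, giving $\sum_j \nabla^c_{{\hat X}_j}{\hat X}_j = 0$ pointwise.

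Therefore $\div(V) = \sum_j \delta_{{\hat X}_j}\Kil(V, {\hat X}_j)$. The flow of ${\hat X}_j$ on $G/K$ is left translation by $\exp(-tX_j)$, and this preserves the $G$-invariant measure coming from Haar measure on $G$, so $\int_{G/K}\delta_{{\hat X}_j} F = 0$ for every smooth function $F$. Summing over $j$ yields the theorem. I expect the main obstacle to be the second step, where the $L^{ct}+L^S$ correction must be shown to vanish --- this is where formula \eqref{eq2.4} for $L^S$ enters nontrivially, and the argument requires combining the tight-frame identity, the $\Kil$-symmetry of $S$, and a judicious choice of $S$-eigenbasis.
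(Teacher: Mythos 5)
Your proof is correct and follows essentially the same architecture as the paper's: decompose $\nabla^0 = \nabla^c + L^{ct} + L^S$ using the biframe $\{(\hat X_j, S^{-1}\hat X_j)\}$ and Proposition~\ref{prop2.9}, show the $L^{ct}$ and $L^S$ contributions vanish pointwise, and reduce the $\nabla^c$ piece to $\sum_j \nabla^c_{\hat X_j}\hat X_j = 0$ together with $\int_{G/K}\d_{\hat X}(f)=0$. The one organizational difference is that you convert to $\Kil$ at the outset so that the pointwise vanishing can be read off directly from $\Kil$-skew-adjointness of $\ad$, whereas the paper stays with $g_0$, invokes the $g_0$-skew-adjointness of $L^0_U$ to move $L^0$ onto $W_j$, and then kills $\sum_j L^0_{W_j}W_j$ using a $g_0$-orthonormal eigenbasis of $S$ --- a minor stylistic variation on the same proof (and note only your second $L^S$ summand has a genuine eigenvalue cancellation; the first vanishes term-by-term despite the residual $\alpha_\beta^{-1}$ factor).
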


\begin{proof}
We have $\nabla^0 = \nabla^c + L^{ct} + L^S$. We split $\int_{G/K} \div(V)$ into the corresponding three terms, and show that each is 0. The first term is
\[
\int_{G/K}\sum_j g_0(\nabla^c_{W_j} V, \ W_j) .
\]
It is independent of the choice of frame $\{W_j\}$ by Proposition \ref{prop2.9}, and by that proposition we can, in fact, use the biframe defined just above. Now $\nabla^c$ is compatible with $g_0$, and so
\[
g_0(\nabla^c_{{\hat X}_j}V,S^{-1}{\hat X}_j) = \d_{{\hat X}_j}(g_0(V,S^{-1}{\hat X}_j)) - \sum g_0(V,\nabla^c_{{\hat X}_j}(S^{-1}{\hat X}_j)) .
\]
But as discussed in the proof of theorem~8.4 of \cite{R22}, for any $X \in \fg$ and any $f \in A$ we have $\int_{G/K} \d_{\hat X}(f) = 0$, because $\d_{\hat X}(f)$ is the uniform limit of the quotients $(\l_{\exp(-tX)}f - f)/t$ as $t \to 0$, and the integral of each of these quotients is $0$ by the $G$-invariance of the measure on $G/K$.  
Thus we see that we would like to show that
\[
\int_{G/K} g_0(V, \ \sum_j\nabla^c_{{\hat X}_j}(S^{-1}{\hat X}_j)) \ = \ 0.
\]
For that it suffices to show that 
\[
 \sum_j\nabla^c_{{\hat X}_j}(S^{-1}{\hat X}_j) \ = \ 0 .
 \]
 But $\nabla^c$ only involves derivatives, and since $S^{-1}$ is constant, it is clear from equation \ref{eq2.1} that $S^{-1}$ commutes with $\nabla^c$. It thus suffices to show that 
$ \sum \nabla_{{\hat X}_j}^c {\hat X}_j = 0$. This was shown at the end of the proof of theorem 8.4 in \cite{R22}. We recall the reasoning here. Early in section 6 of \cite{R22} it is shown that for each $X, Y \in \fg$
\[
(\nabla_{\hat X}^c{\hat Y})(x) = -P([P\Ad_x^{-1}(X),\Ad_x^{-1}(Y)])
\]
for all $x \in G$.  By Proposition~\ref{prop2.9} for each fixed $x \in G$ we can choose the basis $\{X_j\}$ such that  $\{\Ad_x^{-1}(X_j)\}$ is the union of a $\Kil$-orthonormal basis for $\fm$ and one for $\fk$.  For such a basis $(\nabla_{{\hat X}_j}^c {\hat X}_j)(x) = 0$ for each $j$.
 
Now let $L^0 = L^{ct} + L^S$. It remains to show that
\[
\int_{G/K} \sum_j g_0(L^0_{W_j} V, \ W_j) \ = \ 0
\]
for each $V \in \cT(G/K)$. But $\nabla^0 = \nabla^c + L^0$, and $\nabla^0$ is assumed to be compatible with $g_0$. Consequently each $L^0_U$ is skew-adjoint for $g_0$. Thus
\[
g_0(L^0_{W_j}V, \ W_j) = -g_0(V, \ L^0_{W_j} W_j) ,
\]   
and so we see that it suffices to show that $\sum_j L^0_{W_j} W_j = 0$. To show this we treat $L^{ct}$ and $L^S$ separately. Now from equation \ref{eq2.2} we see that for each $j$
\[
(L^{ct}_{W_j} W_j)(x) = (1/2)P[W_j(x), \ W_j(x)] = 0 .
\]
Thus  $\sum_j L^{ct}_{W_j} W_j = 0$

Finally, from equation \ref{eq2.4} we see that
\begin{eqnarray*}
(\sum_j L^S_{W_j}W_j)(x) &=& (1/2)S^{-1}P\sum_j [W_j(x), SW_j(x)] + [W_j(x), SW_j(x)] \\
&=& S^{-1}P\sum_j [W_j(x), \ SW_j(x)] .
\end{eqnarray*}
But $\{W_j(x)\}$ is a frame for $\fm$ and $g_0$, for each $x \in G$, and by Proposition \ref{prop2.9} the above expression is independent of the chosen frame. Notice that $S$ is positive for $g_0$ as well as for $\Kil$. Consequently as frame we can choose a $g_0$-orthonormal basis for $\fm$ consisting of eigenvectors of $S$. It is then clear that $(\sum_j L^S_{W_j}W_j)(x) = 0$.
\end{proof}


\section{The Levi--Civita connection for coadjoint orbits}
\label{sec3}

We now return to the setting of coadjoint orbits as in Section~\ref{sec1}, with $S = |\G_{\dia}|$. 
We will obtain here the more precise formula for $|\G_\dia |^{-1} P[X, \ Y]$ that Theorem
\ref{th2.5} indicates we need in order to obtain a precise formula for the Levi-Civita connection
for $g_\dia$. Motivation for some of the expressions that we consider can be found by working in the complexification of $\fg$ along the lines used in \cite{BFR}. For any $\a,  \b \in \D_\dia$ set
$|\a - \b| = \a - \b$ if $(\a - \b)(Z_\dia) \geq 0$, and otherwise set $|\a - \b| = \b - \a$, so
that always $|\a - \b|(Z_\dia) \geq 0$. Of course $|\a - \b|$ may not be in $\D_\dia$. Recall
from Proposition \ref{prop1.4} that if $\g \in \D_\dia$ then $\g(\zd) > 0$. We do not
have $[\fm, \fm] \subseteq \fm$, but nevertheless:

\begin{lemma}
\label{lem3.1}
Let $\a, \b \in \D_{\dia}$, and let $X \in \fm_\a$ and $Y \in \fm_\b$. Then
\[
[X, Y] - [\jd X, \jd Y] \ \in \ \fm_{\a + \b}  \quad \text{(so \ $= 0$ \ if \ $\a+\b \notin \D_{\dia}$)},
\]
while
\[
[X, \ Y] + [J_{\dia}X, \ J_{\dia}Y] \in \begin{cases}
 \fk \quad &\text{if $\a = \b$} \\
 \fm_{|\a-\b|} \quad &\text{if $\a \ne \b$ \quad (so \ $= 0$ \ if \ $|\a-\b| \notin \D_{\dia}$).}
\end{cases}
\]
Thus, on adding, we find that 
\[
[X, \ Y]  \in \begin{cases}
 \fm_{\a + \b} \oplus  \fm_{|\a - \b|} \quad &\text{if $\a \ne \b$} \\
 \fm_{2\a} \oplus \fk \quad &\text{if $\a = \b$  .}
\end{cases}
\]
Furthermore,
\[
\jd([X, Y] - [\jd X, \jd Y]) = [\jd X, Y] + [X, \jd Y] \ ,
\]
while if $\a \ne \b$ then
\[
\jd([X, Y] + [\jd X, \jd Y]) = \mathrm{sign}(\a(\zd) - \b(\zd))([\jd X, Y] - [X, \jd Y]) \ .
\]
If $\a = \b$ then $\jd P ([X, Y] + [\jd X, \jd Y]) = 0$.
\end{lemma}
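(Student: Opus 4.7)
My plan is to complexify. I would extend $\jd$ and $[\cdot, \cdot]$ to $\fg_\bC := \fg \otimes_\bR \bC$, and for each $W \in \fm$ set $W^{\pm} := W \mp i \jd W$, so that $\jd W^{\pm} = \pm i W^{\pm}$. When $W \in \fm_\g$, the relation $\ad_Z W = \g(Z) \jd W$ immediately gives $\ad_Z W^{\pm} = \pm i \g(Z) W^{\pm}$ for all $Z \in \ft_\dia$. Thus the eigenspace decomposition of $\fg_\bC$ under $\ad_{\ft_\dia}$ consists of the zero-eigenspace $\fk_\bC$ together with subspaces $(\fm_\g)^{\pm} := \{W^{\pm} : W \in \fm_\g\}$ of weight $\pm i\g$ for each $\g \in \D_\dia$. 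The crucial input is Proposition \ref{prop1.4}: every $\g \in \D_\dia$ has $\g(\zd) > 0$, so this list of weights is distinct and no nonzero weight vanishes on $\zd$.

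Given $X \in \fm_\a$ and $Y \in \fm_\b$, the first key observation is that $[X^+, Y^+] \in \fg_\bC$ is a weight vector of weight $i(\a+\b)$. Since $(\a+\b)(\zd) > 0$, the only place it can live is $(\fm_{\a+\b})^+$ when $\a + \b \in \D_\dia$, and it must vanish otherwise. I would then expand
\[
[X^+, Y^+] = \bigl([X, Y] - [\jd X, \jd Y]\bigr) - i \bigl([\jd X, Y] + [X, \jd Y]\bigr),
\]
write this as $W - i \jd W$ with $W \in \fm_{\a+\b}$, and match real and imaginary parts to get simultaneously the first inclusion of the lemma and the identity $\jd([X, Y] - [\jd X, \jd Y]) = [\jd X, Y] + [X, \jd Y]$.

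Next I would do the analogous computation for
\[
[X^+, Y^-] = \bigl([X, Y] + [\jd X, \jd Y]\bigr) + i \bigl([X, \jd Y] - [\jd X, Y]\bigr),
\]
whose weight is $i(\a - \b)$. When $\a = \b$ the weight is zero, so $[X^+, Y^-] \in \fk_\bC$ and hence both $[X, Y] + [\jd X, \jd Y]$ and $[X, \jd Y] - [\jd X, Y]$ lie in $\fk$; the first of these is killed by $P$, giving $\jd P([X, Y] + [\jd X, \jd Y]) = 0$. When $\a \ne \b$ and $(\a - \b)(\zd) > 0$, the vector lies in $(\fm_{\a - \b})^+$ (or is zero), yielding $[X, Y] + [\jd X, \jd Y] \in \fm_{|\a - \b|}$ together with $\jd$ of it equal to $+([\jd X, Y] - [X, \jd Y])$; the case $(\a - \b)(\zd) < 0$ is symmetric, placing the vector in $(\fm_{\b - \a})^-$ and flipping the sign; and the remaining degenerate case $\a \ne \b$ with $(\a - \b)(\zd) = 0$ forces $[X^+, Y^-] = 0$, matching $|\a - \b| \notin \D_\dia$. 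Adding the two structural conclusions finally produces the decomposition of $[X, Y]$.

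The hard part will be the sign bookkeeping in the second step: one might expect $\jd([X, Y] + [\jd X, \jd Y])$ always to equal $[\jd X, Y] - [X, \jd Y]$, but the outcome in fact depends on whether $[X^+, Y^-]$ lies in a ``$+$''-eigenspace or a ``$-$''-eigenspace, and Proposition \ref{prop1.4} pins this down to the sign of $\a(\zd) - \b(\zd)$.
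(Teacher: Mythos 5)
Your proof is correct, and it takes a genuinely different route from the paper's. The paper works entirely in the real Lie algebra: it uses the Jacobi identity to show that $\ad_Z$ interchanges the pair $([X,Y]-[\jd X,\jd Y],\ [\jd X,Y]+[X,\jd Y])$ up to the scalar $\pm(\a+\b)(Z)$, so that $(\ad_Z)^2$ acts on $[X,Y]-[\jd X,\jd Y]$ as $-(\a+\b)^2$; then, specializing to $Z=\zd$ and using $\ad_{\zd}=|\G_\dia|\jd$, hence $(\ad_{\zd})^2=-|\G_\dia|^2$, it identifies the eigenspace and reads off the $\jd$ identity from the first-order $\ad_{\zd}$ relation. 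Your version instead complexifies and packages all of this into the single observation that $[X^+,Y^+]$ (resp.\ $[X^+,Y^-]$) is a weight vector of weight $i(\a+\b)$ (resp.\ $i(\a-\b)$), with the sign analysis coming from the fact that every $\g\in\D_\dia$ has $\g(\zd)>0$, which forces the two eigenspaces $(\fm_\g)^{\pm}$ apart. The trade-off is roughly this: your complexified argument makes the bookkeeping behind the $\mathrm{sign}(\a(\zd)-\b(\zd))$ factor conceptually transparent (it is just the choice between the $+i$- and $-i$-eigenspaces of $\jd$), and it produces the first and second displayed conclusions in one stroke from the real and imaginary parts of a single identity; the paper's real-variable argument avoids introducing $\fg_\bC$ and is slightly more self-contained. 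Amusingly, the author notes in Section 1 that his original approach was exactly to complexify, and that he replaced it with the shorter real-variable one --- so your proposal essentially reconstructs the discarded alternative. One small point worth stating explicitly if you write this up: the zero-weight space for $\ad_{\ft_\dia}$ on $\fg_\bC$ is exactly $\fk_\bC$ because $T_\dia$ is the closure of the one-parameter group through $\zd$, so a weight is determined by its value at $\zd$; this is what lets you identify ``weight $0$'' with ``$\in\fk_\bC$'' when $\a=\b$.
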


\begin{proof} Note that $[X, Y]$ need not be in $\fm$. Let $Z \in \ft$. Within the calculations
below we will, for brevity, often write just $\a$ for $\a(Z)$ and similarly for $\b$. Then
from the Jacobi identity we have
\[  
\begin{aligned}
\ad_Z([X, Y] &- [\jd X, \jd Y])  \\
&= [\a\jd X, Y] + [X, \b\jd Y] + [\a X, \jd Y] + [\jd X, \b Y] \\
&= (\a + \b)([\jd X, Y] + [X, \jd Y]) \ .
\end{aligned}
\]
On substituting $\jd X$ for $X$ in the equation above we obtain
\[
\ad_Z([\jd X, Y] + [ X, \jd Y]) = -(\a + \b)([X, Y] - [\jd X, \jd Y]) \ ,
\]
and on combining these two equations we obtain
\[
(\ad_Z)^2([X, Y] - [\jd X, \jd Y]) = -(\a + \b)^2([X, Y] - [\jd X, \jd Y]) \ .
\]

Recall that $\ad_Z$ carries $\fm$ into itself and sends 
$\fk$ to $\{0\}$, so the range of $\ad_Z$ is in $\fm$. Now let $Z = \zd$,
so that $\a > 0$ and $\b > 0$. Then we see from the above calculations that
$([X, Y] - [\jd X, \jd Y]) \in \fm$. Recall also that $\ad_{\zd} = |\G_\dia|\jd$, so that 
$(\ad_{\zd})^2 = -|\G_\dia|^2$. Then from the above calculations it becomes
clear that $[X, Y] - [\jd X, \jd Y] \ \in \ \fm_{\a + \b}$. Of course it may be that
$\a + \b \notin \D_\dia$ so that $\fm _{\a+\b} = \{0\}$. From the above calculations
we see furthermore that
\[
\jd([X, Y] - [\jd X, \jd Y]) = [\jd X, Y] + [X, \jd Y]  \ .
\]

In the same way, for any $Z \in \ft$ we have
\[
\ad_Z([X, Y] + [\jd X, \jd Y]) = (\a - \b)([\jd X, Y] - [X, \jd Y]) \ 
\]
and
\[
\ad_Z([\jd X, Y] - [X, \jd Y]) = (\b - \a)([X, Y] + [\jd X, \jd Y]) \ ,
\]
so that
\[
(\ad_Z)^2([X, Y] + [\jd X, \jd Y]) = -(\a - \b)^2([X, Y] + [\jd X, \jd Y]) \ .
\]
If $\a = \b$ then it is clear from these calculations that
$([X, Y] + [\jd X, \jd Y]) \in \fk$. If $\a \ne \b$, then on letting $Z = \zd$
and arguing as above, we see that $([X, Y] + [\jd X, \jd Y]) \in \fm_{|\a - \b|}$
for the definition of ${|\a - \b|}$ given above. The statement about 
$\jd([X, Y] + [\jd X, \jd Y])$ now follows much as before.
\end{proof}

Recall the definition of $L^S$ from equations 2.4 and 2.5. We now use the above lemma to
obtain a more precise formula for $L^S$ for the present case in which $S = \G_\dia$.
We denote this $L^S$ for $\G_\dia$ by $L^\dia$. 

\begin{proposition}
\label{prop3.2}
Let $\a, \b \in \D_\dia$, and let $X \in \fm_\a$ and $Y \in \fm_\b$. Then
\[
\begin{aligned}
4L^\dia_X Y & \ = \
(\b(\zd) - \a(\zd))(\a(\zd) + \b(\zd))^{-1}([X,Y] - [\jd X, \jd Y]) \\
& \quad \quad + \  \mathrm{sign}(\b(\zd) - \a(\zd))([X,Y] + [\jd X, \jd Y]) \ ,
\end{aligned}
\]
as long as we make the convention that $\mathrm{sign}(0) = 0$.
\end{proposition}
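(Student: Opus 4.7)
The plan is to combine equation (2.6), applied with $S = |\Gamma_\diamond|$, with the decomposition of $[X,Y]$ provided by Lemma~\ref{lem3.1}. From Proposition~\ref{prop1.4} we know that $\fm_\a$ is the $\a(\zd)$-eigenspace of $|\G_\dia|$, so equation (2.6) specializes to
\[
L^\dia_X Y \ = \ \tfrac{1}{2}\bigl(\b(\zd) - \a(\zd)\bigr)\,|\G_\dia|^{-1} P[X,Y].
\]
The task is thus reduced to expressing $|\G_\dia|^{-1} P[X,Y]$ in the eigenbasis of $|\G_\dia|$.

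First I would write the trivial identity
\[
[X,Y] \ = \ \tfrac{1}{2}\bigl([X,Y] - [\jd X,\jd Y]\bigr) \ + \ \tfrac{1}{2}\bigl([X,Y] + [\jd X,\jd Y]\bigr),
\]
and invoke Lemma~\ref{lem3.1}: the first summand lies in $\fm_{\a+\b}$ (so already in $\fm$ and killed by $|\G_\dia|^{-1}$ being scaled by $(\a(\zd)+\b(\zd))^{-1}$), while the second summand lies in $\fk$ when $\a = \b$ and in $\fm_{|\a-\b|}$ when $\a \ne \b$. Applying $P$ therefore annihilates the $\fk$ part in the $\a=\b$ case and leaves everything else untouched, and then Proposition~\ref{prop1.4} gives
\[
|\G_\dia|^{-1} P[X,Y] \ = \ \frac{[X,Y] - [\jd X,\jd Y]}{2(\a(\zd)+\b(\zd))} \ + \ \frac{[X,Y] + [\jd X,\jd Y]}{2|\a-\b|(\zd)}
\]
whenever $\a \ne \b$, where in this formula the second summand is understood to be zero if $|\a-\b| \notin \D_\dia$.

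Substituting this into the expression for $L^\dia_X Y$, multiplying through by 4, and using $(\b(\zd)-\a(\zd))/|\a-\b|(\zd) = \mathrm{sign}(\b(\zd) - \a(\zd))$, gives exactly the claimed formula in the case $\a \ne \b$. For the remaining case $\a = \b$, the prefactor $\b(\zd) - \a(\zd)$ vanishes, so $L^\dia_X Y = 0$; the right-hand side of the proposition also vanishes, since its first term carries the factor $\b(\zd) - \a(\zd) = 0$ and its second term carries $\mathrm{sign}(0) = 0$ by the stated convention.

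I do not foresee a substantive obstacle: once Lemma~\ref{lem3.1} is in hand, every piece of $[X,Y]$ is already a $|\G_\dia|$-eigenvector, so the whole argument is a direct substitution. The only mild subtlety is the case analysis on $\a$ versus $\b$, which is absorbed precisely by the sign convention stated in the proposition.
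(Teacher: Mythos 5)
Your proof is correct and follows essentially the same route as the paper's: specialize equation (2.6) to $S=|\G_\dia|$, decompose $[X,Y]$ into the two brackets controlled by Lemma~\ref{lem3.1}, apply Proposition~\ref{prop1.4} to invert $|\G_\dia|$ on each eigenspace, and simplify using the identity $(\b(\zd)-\a(\zd))/|\a-\b|(\zd)=\mathrm{sign}(\b(\zd)-\a(\zd))$. The handling of the $\a=\b$ case via the sign convention matches the paper's stipulation that the second term be taken to be zero there.
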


\begin{proof}
From Lemma \ref{lem3.1} we see that
\[
\begin{aligned}
|\G_\dia|^{-1} & P[X, Y]  \\
&=
(1/2)|\G_\dia|^{-1} P(([X,Y] - [\jd X, \jd Y]) + ([X,Y] + [\jd X, \jd Y]))  \\
& = (1/2)((\a(\zd) + \b(\zd))^{-1}([X,Y] - [\jd X, \jd Y])  \\
& \quad + |\a(\zd) - \b(\zd)|^{-1}P([X,Y] + [\jd X, \jd Y]) ,
\end{aligned}
\]
where the last term must be taken to be 0 if $\a = \b$.
On substituting this into equation 2.5 and simplifying, 
we obtain the desired expression for $L^\dia_X(Y)$.
\end{proof}

Recall now that the Levi-Civita connection for $g_\dia$ is
$\nabla^\dia = \nabla^{ct} + L^\dia   = \nabla^c + L^{ct} + L^\dia$,
where on $\fm$ we have 
$L^{ct}_X Y = (1/2)P[X, Y]$. If we set $L^{\dia t} = L^{ct} + L^\dia$, then from 
Proposition 3.2 we see that on $\fm$ we have
\[
\begin{aligned}
4L^{\dia t}_X  &Y  \\
=& \ (1 +(\b(\zd) - \a(\zd))(\a(\zd) + \b(\zd))^{-1})([X,Y] - [\jd X, \jd Y]) \\
& + \ (1 + \mathrm{sign}(\b(\zd) - \a(\zd)))P([X,Y] + [\jd X, \jd Y]) \ ,  \\
=& \ 2\b(\zd)(\a(\zd) + \b(\zd))^{-1}([X,Y] - [\jd X, \jd Y]) \\
& + \ (1 + \mathrm{sign}(\b(\zd) - \a(\zd)))P([X,Y] + [\jd X, \jd Y]) \ .
\end{aligned}
\]
When we extend this to $\cT(G/K)$ by $G$-invariance, and let $\cT^\a(G/K)$ denote the subspace of $\cT(G/K)$ consisting of elements whose range is in $\fm_\a$, we obtain:

\begin{theorem}
\label{thm3.3}
The Levi-Civita connection $\nabla^\dia$ for the Riemannian metric $g_\dia$
is given for $V \in \cT^\a(G/K)$ and $W \in \cT^\b(G/K)$, for $\a, \b \in \D_\dia$, by 
\[
\begin{aligned}
(\nabla^\dia_V & W)(x)   =  (\nabla^c_V W)(x) \\
&  +  \ (1/4)\big(2\b(\zd)(\a(\zd) + \b(\zd))^{-1}([V(x),W(x)] - [\jd V(x), \jd W(x)]) \\
& + \ (1 + \mathrm{sign}(\b(\zd) - \a(\zd)))P([V(x),W(x)] + [\jd V(x), \jd W(x)])\big) \ 
\end{aligned}
\]
for all $x \in G$.
\end{theorem}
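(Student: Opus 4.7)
The plan is to combine the decomposition $\nabla^\dia = \nabla^c + L^{ct} + L^\dia$ (which is Theorem \ref{th2.5} with $S = |\G_\dia|$) with the explicit Lie-algebra formula of Proposition \ref{prop3.2}, extending the result from $\fm$ to $\cT(G/K)$ via $G$-invariance as in Section \ref{sec2}. All the real work happens at the base point $x = e$; the global statement then follows automatically because $\nabla^c$, $L^{ct}$, and $L^\dia$ are each $G$-invariant.

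First I would fix $X \in \fm_\a$, $Y \in \fm_\b$ with $\a, \b \in \D_\dia$, and compute the sum $L^{\dia t}_X Y = L^{ct}_X Y + L^\dia_X Y$. For $L^{ct}$, equation \eqref{eq2.2} gives $L^{ct}_X Y = (1/2) P[X,Y]$, and by Lemma \ref{lem3.1} the bracket decomposes as
\[
[X, Y] = (1/2)\bigl(([X,Y] - [\jd X, \jd Y]) + ([X,Y] + [\jd X, \jd Y])\bigr),
\]
where the first summand already lies in $\fm_{\a+\b} \subseteq \fm$, so $P$ acts as the identity on it, while the projection $P$ is genuinely needed on the second summand only in the degenerate case $\a = \b$ (when it lives in $\fk$). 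Adding the formula from Proposition \ref{prop3.2} then gives
\[
4L^{\dia t}_X Y = \bigl(1 + \tfrac{\b(\zd) - \a(\zd)}{\a(\zd) + \b(\zd)}\bigr)([X,Y] - [\jd X, \jd Y]) + (1 + \sign(\b(\zd) - \a(\zd)))P([X,Y] + [\jd X, \jd Y]).
\]
The coefficient of the first summand simplifies to $2\b(\zd)/(\a(\zd) + \b(\zd))$, exactly as displayed before the theorem.

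Finally I would extend to arbitrary $V \in \cT^\a(G/K)$ and $W \in \cT^\b(G/K)$. Since the values $V(x) \in \fm_\a$ and $W(x) \in \fm_\b$ for every $x \in G$, and since the pointwise action of the $A$-linear endomorphism $L^{\dia t}$ is computed by the same Lie-algebra formula at each $x$ (this is the $G$-invariance discussion in Section \ref{sec2}), the formula derived above holds with $X$ replaced by $V(x)$ and $Y$ replaced by $W(x)$. Adding $(\nabla^c_V W)(x)$, as dictated by Theorem \ref{th2.5} and the definition $\nabla^{ct} = \nabla^c + L^{ct}$, yields the stated expression.

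The calculation is essentially bookkeeping; the only subtlety I foresee is making sure the sign convention $\sign(0) = 0$ correctly accounts for the $\a = \b$ case, where $[X,Y] + [\jd X, \jd Y] \in \fk$ so $P$ annihilates it and the apparent division by $|\a(\zd) - \b(\zd)|$ hidden in $L^\dia$ is harmless. Once this convention is in place, the identities $1 + (\b-\a)/(\a+\b) = 2\b/(\a+\b)$ and Lemma \ref{lem3.1} do all the remaining work.
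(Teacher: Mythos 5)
Your proposal follows the paper's own route essentially verbatim: decompose $\nabla^\dia = \nabla^c + L^{ct} + L^\dia$, rewrite $2P[X,Y]$ using Lemma~\ref{lem3.1} so that $P$ acts trivially on the $\fm_{\a+\b}$-piece, add the formula of Proposition~\ref{prop3.2}, simplify $1 + (\b-\a)/(\a+\b)$ to $2\b/(\a+\b)$, and extend by $G$-invariance. The handling of the $\a=\b$ case via $\sign(0)=0$ and the observation that $P$ annihilates the $\fk$-component are exactly the points the paper relies on, so the argument is correct and complete.
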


The above formula should be compared to formula 7.15 in \cite{BIL}. We remark that from Theorem 3.3 and Lemma 3.1 it is easily seen that  $g_\dia$ is ``naturally reductive'' \cite{KbN, Agr}, so has Levi-Civita connection equal to $\nabla^{ct}$ \cite{Agr}, exactly when $G/K$ is a symmetric space, that is, when $[\fm, \fm]\subseteq \fk$

In our K\"ahler situation we expect that $\jd$ will commute with $\nabla^\dia$. This is essential
for the construction that we will give shortly for the Dirac operator for $g_\dia$. We now check
this fact directly.

\begin{proposition}
\label{prop3.4}
With notation as above, $\jd$ commutes with $\nabla^\dia$.
\end{proposition}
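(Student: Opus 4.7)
The plan is to decompose $\nabla^{\dia} = \nabla^c + L^{\dia t}$ with $L^{\dia t} = L^{ct} + L^{\dia}$ as described by Theorem \ref{thm3.3}, and show separately that each summand commutes with $\jd$. The commutation with $\nabla^c$ is immediate from formula \eqref{eq2.1}: because $\jd$ is a fixed linear endomorphism of $\fm$ that commutes with the $\Ad$-action of $K$, it lifts to an $A$-module endomorphism of $\cT(G/K)$, and
\[
(\nabla^c_V(\jd W))(x) \ = \ D_0^t\bigl(\jd W(x\exp(tV(x)))\bigr) \ = \ \jd(\nabla^c_V W)(x).
\]
Moreover, since $\jd$ commutes with $\ad_Z$ for every $Z \in \ft_\dia$, each weight space $\fm_\a$ is $\jd$-invariant, so $\jd$ preserves $\cT^\a(G/K)$. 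By $G$-invariance it therefore suffices to verify $L^{\dia t}_V(\jd W) = \jd L^{\dia t}_V W$ at the identity for $V \in \fm_\a$ and $W \in \fm_\b$, with $\a,\b \in \D_\dia$.

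I then plan to use the explicit formula for $4L^{\dia t}_V W$ from Theorem \ref{thm3.3} and analyse its two summands separately with the help of Lemma \ref{lem3.1}. For the first summand $2\b(\zd)(\a(\zd)+\b(\zd))^{-1}([V,W] - [\jd V, \jd W])$, replacing $W$ by $\jd W \in \fm_\b$ leaves the real coefficient unchanged and turns the inner bracket into $[V,\jd W] + [\jd V, W]$. The last identity of Lemma \ref{lem3.1} states exactly that this equals $\jd([V,W] - [\jd V, \jd W])$, so the first summand commutes with $\jd$.

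For the second summand $(1 + \sign(\b(\zd) - \a(\zd)))\, P([V,W] + [\jd V, \jd W])$, substituting $\jd W$ for $W$ turns the bracket into $[V,\jd W] - [\jd V, W]$. When $\a \ne \b$, rearranging the second "furthermore" identity of Lemma \ref{lem3.1} gives
\[
[V,\jd W] - [\jd V, W] \ = \ \sign(\b(\zd) - \a(\zd))\, \jd\bigl([V,W] + [\jd V, \jd W]\bigr),
\]
and since both sides already lie in $\fm$ one may insert $P$ freely. The required commutation then reduces to the elementary identity $(1 + s)s = 1 + s$ for $s = \pm 1$. When $\a = \b$ one has $\sign(0) = 0$, and the final clause of Lemma \ref{lem3.1} gives $P([V,W] + [\jd V, \jd W]) = 0$; the substituted expression $P([V,\jd W] - [\jd V, W])$ likewise vanishes, as one sees by setting $W' = \jd W$, so that $[V,\jd W] - [\jd V, W] = [V, W'] + [\jd V, \jd W']$ with $V, W' \in \fm_\a$, to which the same clause applies.

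The main obstacle is simply careful bookkeeping of the signs and of which identity of Lemma \ref{lem3.1} to use for each summand; the degenerate case $\a = \b$ in the second summand in particular requires the observation that, after $\jd$ is absorbed into $W$, the two vectors again lie in the same weight space. No further input is required beyond the explicit formula of Theorem \ref{thm3.3} and the bracket identities of Lemma \ref{lem3.1}.
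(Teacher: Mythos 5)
Your proof is correct and follows essentially the same path as the paper: reduce to showing $L^{\dia t} = L^{ct} + L^{\dia}$ commutes with $\jd$ on weight spaces, then compare the two summands of the explicit formula of Theorem~\ref{thm3.3} using the bracket identities of Lemma~\ref{lem3.1}. You are in fact slightly more careful than the paper in the degenerate case $\a = \b$, where the paper's algebraic step $(1+s)s = 1+s$ fails for $s=0$ and one must, as you do, observe that the projected expression $P([V,\jd W] - [\jd V, W])$ itself vanishes by the $\a = \b$ clause of Lemma~\ref{lem3.1} applied to the pair $(V, \jd W)$.
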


\begin{proof}
It is easily seen that $\jd$ commutes with $\nabla^c$, so we only need to show that it commutes
with $L^{\dia t}$. Note that in general $\jd$ does not commute with $L^{ct}$, so we need to work
with the combination $L^{ct} + L^\dia = L^{\dia t}$. By $G$-invariance it suffices to deal just with
elements of $\fm$. Let $\a, \b \in \D_\dia$, and let $X \in \fm_\a$ and $Y \in \fm_\b$. For 
brevity we again often write just $\a$ for $\a(\zd)$ and similarly for $\b$ within our calculations. Then
when we apply $\jd$ to $L^{\dia t}$ and apply the results of
Lemma \ref{lem3.1}, we obtain
\[
\begin{aligned}
4\jd L^{\dia t}_X & Y  
= \ 2\b(\a + \b)^{-1}([\jd X,Y] + [ X, \jd Y]) \\
& + \ (1 + \mathrm{sign}(\b - \a))\mathrm{sign}(\a - \b)P([\jd X,Y] - [ X, \jd Y]) \ ,
\end{aligned}
\]
while
\[
\begin{aligned}
4 L^{\dia t}_X & (\jd Y)  
= \ 2\b(\a + \b)^{-1}([\jd X,Y] + [ X, \jd Y]) \\
& + \ (1 + \mathrm{sign}(\b - \a))P([ X, \jd Y] - [\jd X,  Y])  \ .
\end{aligned}
\]
Notice that
\[ 
\begin{aligned}
( & 1 + \mathrm{sign}(\b - \a))\mathrm{sign}(\a - \b)P([\jd X,Y] - [ X, \jd Y]) \\
& = \ (1 + \mathrm{sign}(\b - \a))\mathrm{sign}(\b - \a)P( [ X, \jd Y] - [\jd X,Y] ) \\
& = \ (\mathrm{sign}(\b - \a) + 1)P( [ X, \jd Y] - [\jd X,Y] ) \ .
\end{aligned} 
\]
Thus $\jd L^{\dia t}_X Y = L^{\dia t}_X (\jd Y)$ as desired.
\end{proof}


\section{The spinor representation}
\label{sec5}

In view of the results of the previous sections, it is appropriate to consider in general an even-dimensional real vector space $\fm$ with a given inner product $g_0$, a compact Lie group $K$ that is not required to be semisimple or connected, a representation $\pi$ (instead of $\Ad|_K$) of $K$ on $\fm$ preserving $g_0$, and a complex structure $J$ on $\fm$ respecting both $g_0$ and $\pi$.  For use in constructing a Dirac operator we seek a representation of the complex Clifford algebra over $\fm$ for $g_0$ that respects the action of $K$.  (Many coadjoint orbits are not $\spin$ manifolds \cite{BIL, Owc, DlN, LwM}, only $\spin^c$.)  Much of the material in this section is taken from chapter~5 of \cite{GVF}.  The exposition in \cite{GVF} is especially suitable for our needs, and it includes much detail on a number of aspects.  But as before, here we will try to take the shortest path to what we need.  An important point is that we will find that because of the complex structure we do not need to involve the $\spin^c$ groups, with their attendant complexities.

As in \cite{LwM,GVF}, we will denote the complex Clifford algebra over $\fm$ for $g_0$ by $\bC\ell(\fm)$.  It is the complexification of the real Clifford algebra for $\fm$ and $g_0$.  We follow the convention that the defining relation is
\[
XY + YX = -2g_0(X,Y)1.
\]
We include the minus sign for consistency with \cite{LwM,R22}.  Thus in applying the results of the first pages of chapter~5 of \cite{GVF} we must let the $g$ there to be $-g_0$.  After exercise 5.6 of \cite{GVF} it is assumed that $g$ is positive, so small changes are needed when we use the later results in \cite{GVF} but with our different convention.  The consequence of including the minus sign is that in the representations which we will construct the elements of $\fm$ will act as skew-adjoint operators, just as they do for orthogonal or unitary representations of $G$ if $\fm$ arises as in the previous section, rather than as self-adjoint operators as happens when the minus sign is omitted.

Because $\fm$ is of even dimension, the algebra $\bC\ell(\fm)$ is isomorphic to a full matrix algebra \cite{GVF}.  We equip $\bC\ell(\fm)$ with the involution $^*$ (conjugate linear, with $(ab)^* = b^*a^*$) that takes $X$ to $-X$ for $X \in \fm$ (again so that the elements of $\fm$ are skew-adjoint).

Let $O(\fm,g_0)$ denote the group of operators on $\fm$ orthogonal for $g_0$.  By the universal property of Clifford algebras each element $R$ of $O(\fm,g_0)$ determines an automorphism of $\bC\ell(\fm)$ (a ``Bogoliubov'' automorphism) given on a product $X_1\cdots X_p$ of elements of $\fm$ in $\bC\ell(\fm)$ by 
\begin{equation}
\label{eq5.1}
R(X_1X_2 \cdots X_p) = R(X_1)R(X_2) \cdots R(X_p).
\end{equation}
In this way we obtain a homomorphism from $O(\fm,g_0)$ into the automorphism group of $\bC\ell(\fm)$.  Since $\pi$ gives a homomorphism of $K$ into $O(\fm,g_0)$ we obtain a homomorphism, still denoted by $\pi$, of $K$ into the automorphism group of $\bC\ell(\fm)$, which extends the action of $K$ on $\fm$.  The Lie algebra $so(\fm,g_0)$ of $O(\fm,g_0)$ will then act as a Lie algebra of derivations of $\bC\ell(\fm)$, given for $L \in so(\fm,g_0)$ by
\begin{equation}
\label{eq5.2}
\begin{aligned}
L(X_1X_2\cdots X_p) &= L(X_1)X_2\cdots X_p \\
&+ \ X_1L(X_2)X_3\cdots X_p \ + \ \cdots + X_1 \cdots X_{p-1}L(X_p).
\end{aligned}
\end{equation}
Corresponding to this we have an action of $\fk$ as derivations of $\bC\ell(\fm)$, again
denoted by $\pi$.

Because $\bC\ell(\fm)$ is isomorphic to a full matrix algebra, it has, up to equivalence, exactly one irreducible representation.  We seek an explicit construction of such a representation in a form which makes manifest that this representation carries an action of $K$ that is compatible with the action of $K$ on $\bC\ell(\fm)$.  As shown in \cite{GVF} beginning with definition~5.6, the complex structure $J$ on $\fm$ leads to an explicit construction.  (See also the discussion after corollary~5.17 of \cite{LwM}.)  We will denote the resulting Hilbert space for this representation by $\cS$, for ``spinors''.

To begin with, we use $J$ to view $\fm$ as a complex vector space by setting $iX = JX$, as we did earlier.  We then define a positive-definite sesquilinear form, i.e., complex inner product, on $\fm$, by
\[
\<X,Y\>_J = g_0(X,Y) + ig_0(J(X),Y).
\]
Note that, as in \cite{GVF}, we take it linear in the second variable.  When we view $\fm$ as a complex vector space with this inner product, we denote it by $\fm_J$.  We note that because $\pi$ commutes with $J$ and preserves $g_0$, it is a unitary representation of $K$ on $\fm_J$ (so that, in particular, actually $\pi(K) \subseteq SO(\fm,g_0)$).  As in definition~5.7 of \cite{GVF} we let $\cF(\fm_J)$ denote the complex exterior algebra $\bigwedge^*{\fm_J}$ over $\fm_J$.  It is referred to in \cite{GVF} as the (unpolarized) Fock space.  It will be our space $\cS$ of spinors, and we will write $\cF(\fm_J)$ or $\cS$ as convenient.  Then we equip $\cS$ with the inner product determined by
\begin{equation}
\label{eq5.3}
\<X_1\wedge \dots \wedge X_p, \ Y_1 \wedge \dots \wedge Y_q\>_J = \d_{pq} \det[\<X_k,Y_l\>_J],
\end{equation}
which is equation 5.17a of \cite{GVF}.  Let $U(\fm_J)$ denote the unitary group of $\fm_J$.  By the universal property of exterior algebras the action of $U(\fm_J)$ on $\fm_J$ extends to an action on $\cF(\fm_J)$ by exterior-algebra automorphisms, defined in much the same way as in equation \eqref{eq5.1}.  By means of the homomorphism $\pi$ from $K$ into $U(\fm_J)$  we obtain an action of $K$ as automorphism of $\cF(\fm_J)$, again denoted by $\pi$.  Then the Lie algebra $u(\fm_J)$ of $U(\fm_J)$ will act as a Lie algebra of exterior-algebra derivations of $\cF(\fm_J)$, and by this means we obtain an action, $\pi$, of $\fk$ as derivations of $\cF(\fm_J)$.

We need a representation of $\bC\ell(\fm)$ on $\cS$.  As done shortly after exercise 5.12 of \cite{GVF}, we define annihilation and creation operators, $a_J(X)$ and $a_J^{\dag}(X)$, on $\cF(\fm_J)$ for $X \in \fm$ by
\[
a_J(X)(X_1 \wedge \dots \wedge X_p) = \sum_{j=1}^p (-1)^{j-1}\<X,X_j\>_J \ X_1 \wedge \dots \wedge {\hat X}_j \wedge \dots \wedge X_p
\]
(where ${\hat X}_j$ means to omit that term), and
\[
a_J^{\dag}(X)(X_1 \wedge \dots \wedge X_p) = X \wedge X_1 \wedge \dots \wedge X_p
\]
for $X_1,\dots,X_p \in \fm_J$.  Note that $a_J(X)$ is conjugate linear in $X$.  One then checks, much as done in the paragraph before definition 5.1 of \cite{GVF}, that
\[
a_J(X)a_J^{\dag}(Y) = a_J^{\dag}(Y)a_J(X) = \<X,Y\>_JI_{\cS},
\]
where $I_{\cS}$ is the identity operator on $\cF(\fm_J)$, and
\[
a_J(X)a_J(Y) + a_J(Y)a_J(X) = 0 = a_J^{\dag}(X)a_J^{\dag}(Y) + a_J^{\dag}(Y)a_J^{\dag}(X)
\]
for $X,Y \in \fm$.  We then set
\[
\k_J(X) = i(a_J(X) + a_J^{\dag}(X)).
\]
(So the $i$ here reflects our sign convention, different from that of \cite{GVF}.)  Note that $\k_J(X)$ is only real-linear in $X$.  Using the anti-commutation relations above, we see that
\[
\k_J(X)\k_J(Y) + \k_J(Y)\k_J(X) = -\<X,Y\>_J - \<Y,X\>_J = -2g_0(X,Y)
\]
for $X,Y \in \fm$, where we omit $I_{\cS}$ on the right as is traditional.  But this is the relation that defines $\bC\ell(\fm)$.  Thus $\k_J$ extends by universality to give a homomorphism, again denoted by $\k_J$, from $\bC\ell(\fm)$ into the algebra, $\cL(\cF(\fm_J))$, of linear operators on $\cF(\fm_J)$.  Let $\dim_{\bR}(\fm) = 2n$. Then $\dim_{\bC}(\fm_J) = n$, so that $\dim_\bC(\cF(\fm_J)) = 2^n$.  But $\dim_{\bC}(\bC\ell(\fm)) = 2^{2n} = (2^n)^2$.  Since $\bC\ell(\fm)$ is isomorphic to a full matrix algebra, and since $\k_J$ is clearly not the $0$ homomorphism, the homomorphism $\k_J$ must be bijective, and gives an irreducible representation of $\bC\ell(\fm)$ on $\cF(\fm_J)$.  Thus we can take $\cS = \cF(\fm_J)$ as our Hilbert space of spinors, with the action of $\bC\ell(\fm)$ on $\cS$ given by $\k_J$.

Recall that we have actions of $O(\fm)$ on $\bC\ell(\fm)$ and of $U(\fm_J)$ on $\cF(\fm_J)$.  Since $U(\fm_J) \subset SO(\fm)$, we have an action of $U(\fm_J)$ on $\bC\ell(\fm)$.  Let us denote by $\rho$ the actions of $U(\fm_J)$ on both $\bC\ell(\fm)$ and $\cF(\fm_J)$.  A crucial fact for us is:

\setcounter{theorem}{3}
\begin{proposition}
\label{prop5.4}
The action $\k_J$ of $\bC\ell(\fm)$ on $\cF(\fm_J)$ respects the actions $\rho$ of $U(\fm_J)$ on $\bC\ell(\fm)$ and $\cF(\fm_J)$ in the sense that
\setcounter{equation}{4}
\begin{equation}
\label{eq5.5}
\rho_R(\k_J(c)\psi) = \k_J(\rho_R(c))(\rho_R(\psi))
\end{equation}
for all $R \in U(\fm_J)$, $c \in \bC\ell(\fm)$ and $\psi \in \cS$.
\end{proposition}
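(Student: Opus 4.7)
The plan is to show that the identity \eqref{eq5.5} is equivalent to the intertwining relation
\[
\k_J(\rho_R(c)) \ = \ \rho_R \circ \k_J(c) \circ \rho_R^{-1}
\]
for all $R \in U(\fm_J)$ and $c \in \bC\ell(\fm)$. Once this form is established, both sides define algebra homomorphisms from $\bC\ell(\fm)$ into $\cL(\cF(\fm_J))$: the left side because $\rho_R$ is a Bogoliubov automorphism of $\bC\ell(\fm)$ and $\k_J$ is a homomorphism, and the right side because conjugation by an invertible operator is an automorphism of $\cL(\cF(\fm_J))$. By the universal property of $\bC\ell(\fm)$, two such homomorphisms agree as soon as they agree on the generating subspace $\fm$, so the problem reduces to checking, for each $X \in \fm$, that
\[
\rho_R \circ \k_J(X) \circ \rho_R^{-1} \ = \ \k_J(RX).
\]

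To verify this on the generators, I would split $\k_J(X) = i(a_J(X) + a_J^\dag(X))$ and check the transformation law separately for $a_J^\dag$ and $a_J$. For the creation operator the computation is purely formal: since $\rho_R$ acts on $\cF(\fm_J)$ by the exterior-algebra automorphism extending $R$, one has $\rho_R(X \wedge X_1 \wedge \cdots \wedge X_p) = RX \wedge \rho_R(X_1 \wedge \cdots \wedge X_p)$, which gives at once $\rho_R a_J^\dag(X) \rho_R^{-1} = a_J^\dag(RX)$. For the annihilation operator, I would use that $R \in U(\fm_J)$ means $R$ preserves $\<\cdot,\cdot\>_J$, so $\<X, R^{-1}X_j\>_J = \<RX, X_j\>_J$; plugging this into the defining formula for $a_J(X)$ and recombining with $\rho_R$ applied to the wedge products yields $\rho_R a_J(X) \rho_R^{-1} = a_J(RX)$. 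Combining these two identities gives the required relation for generators.

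The main obstacle is really just keeping the conventions straight: the fact that $a_J(X)$ is conjugate-linear in $X$ while $a_J^\dag(X)$ is linear makes it important that $R$ is $\bC$-linear on $\fm_J$ (i.e., commutes with $J$), and it is exactly here that the hypothesis $R \in U(\fm_J)$ (rather than merely $R \in O(\fm,g_0)$) is used; without it the sesquilinear pairing $\<\cdot,\cdot\>_J$ would not transform in the way needed for the annihilation calculation to close up. Everything else is routine: the extension from generators to arbitrary $c \in \bC\ell(\fm)$ is automatic by universality, and passing back from the conjugation form to the stated form \eqref{eq5.5} is immediate by applying both sides to $\psi \in \cS$.
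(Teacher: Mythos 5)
Your proposal is correct and follows essentially the same route as the paper's proof: both reduce to checking the intertwining identity on the generators $X \in \fm$ of the Clifford algebra, then verify it separately for the creation and annihilation operators, with the unitarity of $R$ (preservation of $\<\cdot,\cdot\>_J$) entering precisely in the annihilation-operator step. The only cosmetic difference is that you make the reduction-to-generators step explicit via the universal property, while the paper leaves that implicit; the computations themselves are the same.
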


\begin{proof}
It suffices to show that
\[
\rho_R(\k_J(X)(X_1 \wedge \dots \wedge X_p)) = \k_J(\rho_R(X))(\rho_R(X_1 \wedge \dots \wedge X_p))
\]
for all $R \in U(\fm_J)$ and all $X,X_1,\dots,X_p \in \fm_J$.  Now
\begin{align*}
\rho_R(a_J^{\dag}(X)(X_1 \wedge \dots \wedge X_p)) &= \rho_R(X \wedge X_1 \wedge \dots \wedge X_p) \\
&= (R(X)) \wedge (R(X_1)) \wedge \dots \wedge (R(X_p)) \\
&= a_J^{\dag}(R(X))(\rho_R(X_1 \wedge \dots \wedge X_p)).
\end{align*}
A similar calculation, using the fact that $\rho$ preserves $\<\cdot,\cdot\>_J$, shows that
\[
\rho_R(a_J(X)(X_1 \wedge \dots \wedge X_p)) = a_J(R(X))(\rho_R(X_1 \wedge \dots \wedge X_p)).
\]
In view of how $\k_J$ is defined in terms of $a_J^{\dag}$ and $a_J$, we see that \eqref{eq5.5} holds.
\end{proof}

Since $\pi$ carries $K$ into $U(\fm_J)$, we immediately obtain:

\setcounter{theorem}{5}
\begin{corollary}
\label{cor5.6}
The actions $\pi$ of $K$ on $\bC\ell(\fm)$ and $\cF(\fm_J)$ are compatible with the action $\k_J$ of $\bC\ell(\fm)$ on $\cF(\fm_J)$ in the sense given above.
\end{corollary}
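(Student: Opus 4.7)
The plan is that this is essentially an immediate specialization of Proposition~\ref{prop5.4}. The key observation, already noted in the paragraph following equation~\eqref{eq5.3}, is that because $\pi$ commutes with $J$ and preserves $g_0$, the representation $\pi$ carries $K$ into $U(\fm_J)$. Moreover, the extended actions of $K$ on $\bC\ell(\fm)$ and on $\cF(\fm_J)$ (both denoted $\pi$) were defined precisely by composing the group homomorphism $\pi\colon K \to U(\fm_J)$ with the Bogoliubov-type extensions $\rho$ of $U(\fm_J)$ to $\bC\ell(\fm)$ and to $\cF(\fm_J)$; that is, for $s \in K$ the operators $\pi(s)$ on $\bC\ell(\fm)$ and $\cF(\fm_J)$ coincide with $\rho_{\pi(s)}$ on the respective spaces.

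With that identification in hand, I would simply apply Proposition~\ref{prop5.4} with $R = \pi(s)$ for an arbitrary $s \in K$. The conclusion of that proposition then reads
\[
\pi(s)\bigl(\k_J(c)\psi\bigr) \;=\; \k_J\bigl(\pi(s)(c)\bigr)\bigl(\pi(s)(\psi)\bigr)
\]
for all $c \in \bC\ell(\fm)$ and $\psi \in \cS$, which is exactly the compatibility statement claimed in Corollary~\ref{cor5.6}.

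There is essentially no obstacle here beyond unwinding the notation. The only mild subtlety is to confirm that the symbol $\pi$ on each of the three spaces ($\fm_J$, $\bC\ell(\fm)$, $\cF(\fm_J)$) is consistently the composition $\rho \circ \pi$, so that Proposition~\ref{prop5.4}'s hypothesis $R \in U(\fm_J)$ is legitimately met by $R = \pi(s)$. This is immediate from the construction of the actions, which were built via the universal properties of $\bC\ell(\fm)$ and $\cF(\fm_J)$ applied to the operators $\pi(s)$ on $\fm$ (respectively $\fm_J$). Hence the corollary follows with no further computation.
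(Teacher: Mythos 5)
Your proof is correct and takes the same approach as the paper: the paper simply observes that $\pi$ maps $K$ into $U(\fm_J)$ and then invokes Proposition~\ref{prop5.4} with $R = \pi(s)$, which is precisely your argument (with a bit more explicit unwinding of the notation).
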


Let $\rho$ denote also the actions of the Lie algebra $u(\fm_J)$ on $\bC\ell(\fm)$ and $\cF(\fm_J)$ by derivations.  We quickly obtain the following corollary, which we will need later for our discussion of connections:

\begin{corollary}
\label{cor5.7}
The action $\k_J$ of $\bC\ell(\fm)$ on $\cF(\fm_J)$ is compatible with the actions $\rho$ of $u(\fm_J)$ on $\bC\ell(\fm)$ and $\cF(\fm_J)$ in the sense of the Leibniz rule
\setcounter{equation}{7}
\begin{equation}
\label{eq5.8}
\rho_L(\k(c)\psi) = \k(\rho_L(c))\psi + \k(c)\rho_L(\psi)
\end{equation}
for all $L \in u(\fm_J)$, $c \in \bC\ell(\fm)$ and $\psi \in \cS$.
\end{corollary}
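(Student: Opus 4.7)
The plan is to differentiate Proposition~\ref{prop5.4} along one-parameter subgroups of $U(\fm_J)$. Given any $L \in u(\fm_J)$, set $R_t = \exp(tL) \in U(\fm_J)$ for $t \in \bR$. By Proposition~\ref{prop5.4} applied to $R = R_t$, we have, for all $t$,
\[
\rho_{R_t}(\k_J(c)\psi) \; = \; \k_J(\rho_{R_t}(c))\,(\rho_{R_t}(\psi))
\]
for each fixed $c \in \bC\ell(\fm)$ and $\psi \in \cS$.

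I will then differentiate both sides with respect to $t$ at $t = 0$. The key point is that the derivation actions of $u(\fm_J)$ on $\bC\ell(\fm)$ and on $\cF(\fm_J)$, both denoted $\rho_L$, are by construction the infinitesimal generators of the corresponding group actions: on $\bC\ell(\fm)$ this is precisely the content of equation \eqref{eq5.2} applied to $L$ viewed as an element of $so(\fm, g_0) \supseteq u(\fm_J)$ (since $R_t$ acts on Clifford products by extending the action on $\fm$ multiplicatively), and on $\cF(\fm_J)$ the same holds by the analogous derivation extension to exterior products. Thus $D^t_0 \rho_{R_t}(c) = \rho_L(c)$ in $\bC\ell(\fm)$ and $D^t_0 \rho_{R_t}(\psi) = \rho_L(\psi)$ in $\cF(\fm_J)$, while $D^t_0 \rho_{R_t}(\k_J(c)\psi) = \rho_L(\k_J(c)\psi)$.

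Applying the ordinary product rule to the right-hand side (which is a product, in the algebra $\cL(\cF(\fm_J))$, of the $t$-dependent operator $\k_J(\rho_{R_t}(c))$ with the $t$-dependent vector $\rho_{R_t}(\psi)$), and using the linearity of $\k_J$ to interchange $\k_J$ with $D^t_0$, we get
\[
\rho_L(\k_J(c)\psi) \; = \; \k_J(\rho_L(c))\,\psi + \k_J(c)\,\rho_L(\psi),
\]
which is exactly \eqref{eq5.8}.

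There is essentially no obstacle here beyond bookkeeping: the only point requiring care is confirming that the Lie-algebra derivations on $\bC\ell(\fm)$ and on $\cF(\fm_J)$ used in the statement of the corollary agree with the $t=0$ derivatives of the respective one-parameter group actions $\rho_{R_t}$. This is immediate from the universal property used to define both actions (they are uniquely determined by their restriction to $\fm$, on which the equality of the derivation with the derivative of $R_t$ is the defining relation between a Lie group and its Lie algebra acting linearly on $\fm$).
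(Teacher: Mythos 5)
Your proof is correct and is precisely the argument the paper has in mind when it says the corollary is ``quickly obtained'' from Proposition~\ref{prop5.4}: differentiate the intertwining identity along a one-parameter subgroup $R_t = \exp(tL)$ at $t=0$, using the product rule on the right-hand side and the linearity of $\k_J$, together with the fact (built into the definitions via equation~\eqref{eq5.2} and its exterior-algebra analogue) that the derivation actions of $u(\fm_J)$ are the infinitesimal generators of the corresponding group actions. No gaps; this matches the paper's intended route.
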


Notice that we have never needed to use explicitly the $\spin^c$ groups in our discussion.

Next, in order to see that everything fits well, let us show that $\k_J$ respects the involutions, where by this we mean that
\setcounter{equation}{8}
\begin{equation}
\label{eq5.9}
\k_J(c^*) = (\k_J(c))^*
\end{equation}
for all $c \in \bC\ell(\fm)$, where the $^*$ on the left is the involution on $\bC\ell(\fm)$ defined earlier, while the $^*$ on the right means the adjoint of the operator for the inner product on $\cS$.  (Thus $\cS$ is a ``self-adjoint Clifford  module'' as in definition 9.3 of \cite{GVF}, but for our conventions.)  It suffices to prove this for $c = X$ for all $X \in \fm$, that is, it suffices to show that $(\k_J(X))^* = -\k_J(X)$.  In view of how $\k_J$ is defined in terms of $a_J$ and $a_J^{\dag}$ it suffices to show that
\[
(a_J(X))^* = a_J^{\dag}(X).
\]
This is well-known, and can be seen as follows.  We can assume that $\|X\|_J = 1$.  Set $e_1 = X$, and choose $e_2,\dots,e_n \in \fm_J$ such that $e_1,\dots,e_n$ is an orthonormal $\bC$-basis for $\fm_J$.  For any $I = \{j_1 < j_2 < \dots < j_p\} \subseteq \{1,\dots,n\}$ set $e_I = e_{j_1} \wedge e_{j_2} \wedge \dots \wedge e_{j_p}$ in $\cS$, and set $e_{\emptyset} = 1$.  A glance at \eqref{eq5.3} shows that $\{e_I\}$ is an orthonormal basis for $\cS$.  Then from \eqref{eq5.3} one quickly sees that
\[
\<a_J(e_1)e_{I_1},e_{I_2}\>_J = 1 \ \text{ if } \  1 \in I_1 \text{ and } I_2 = I_1\setminus \{1\},
\]
and is $0$ otherwise, while
\[
\<e_{I_1},a_J^{\dag}(e_1)e_{I_2}\>_J = 1 \ \text{ if } \ 1 \notin I_2 \text{ and } I_1 = I_2 \cup \{1\},
\]
and is $0$ otherwise.  This shows that $a_J^{\dag}(e_1)$ is the adjoint of $a_J(e_1)$.

Finally, let us consider the chirality element, following the discussion in definition 5.2 of \cite{GVF} and the paragraphs following it.  Choose an orientation for $\fm$, and let $X_1,\dots,X_{2n}$ be an oriented orthonormal $\bR$-basis for $\fm$ and $g_0$.  Define the chirality element, $\g$, of $\bC\ell(\fm)$ (for the chosen orientation) by
\[
\g = (i)^n X_1X_2 \cdots X_{2n}.
\]
(The $i$ is included because our sign convention differs from that of \cite{GVF}.)  Then, much as discussed in \cite{GVF}, $\g$ does not depend on the choice of the oriented orthonormal basis, and it satisfies $\g^2 = 1$, $\g^* = \g$ and $\g X \g = -X$ for every $X \in \fm$.  In particular, conjugation by $\g$ is the grading operator on $\bC\ell(\fm)$ that gives the even and odd parts.  Since $U(\fm_J)$ is connected, each element of $U(\fm_J)$ carries an oriented orthonormal basis into an other one, and thus leaves $\g$ invariant.  Consequently, for any $X \in u(\fm_J)$ its derivation action on $\bC\ell(\fm)$ takes $\g$ to $0$.  Since $\pi(K) \subseteq U(\fm_J)$ we have $\pi_s(\g) = \g$ for all $s \in K$, and $\pi_X(\g) = 0$ for all $X \in \fk$.  Because $\k_J$ is a $*$-representation, we will have $(\k_J(\g))^2 = 1$ and $\k_J(\g) = (\k_J(\g))^*$.  Since $\g \ne 1$, $\k_J(\g) \ne I_{\cS}$, and thus $\k_J(\g)$ will split $\cS$ into two orthogonal subspaces, $\cS^{\pm}$, the ``half-spinor'' spaces.  Because $\pi_s(\g) = \g$ for all $s$, each of $\cS^+$ and $\cS^-$ will be carried into itself by the representation $\pi$ of $K$ on $\cS$.  Because $\g X \g = -X$ for $X \in \fm$, we see that $\k_J(X)$ will carry $\cS^+$ into $\cS^-$ and $\cS^-$ into $\cS^+$ for each $X \in \fm$.  Of course each of $\cS^+$ and $\cS^-$ will be carried into itself by the subalgebra of even elements of $\bC\ell(\fm)$.


\section{Dirac operators for almost-Hermitian $G/K$}
\label{sec6}

In this section we assume as before that $G$ is a compact semisimple Lie group, but we only assume that $K$ is a closed subgroup of $G$, not necessarily connected.  We assume further only that $G/K$ is (homogeneous) almost Hermitian, by which we mean that we have an inner product, $g_0$, on $\fm$ that is invariant for the $\Ad$-action of $K$ on $\fm$, and that we have a complex structure $J$ on $\fm$ that is orthogonal for $g_0$ and commutes with the $\Ad$-action of $K$ (so $\fm$ is even-dimensional).  Then $g_0$ and $J$ are extended to $\cT(G/K)$ pointwise.  There are many examples of such coset spaces beyond the coadjoint orbits.  See for example the many constructions in sections~8 and 9 of \cite{WlG}.  But I have not seen in the literature any complete classification of all of the possibilities for almost-Hermitian compact coset spaces.  In this section we show how to construct a ``Dirac operator'' for any connection on $\cT(G/K)$ that is compatible with $g_0$ and commutes with $J$.  For coadjoint orbits we have seen that both the canonical connection and (in Theorem \ref{prop3.4}) the Levi--Civita connection $\nabla^{\dia}$ for $g_{\dia}$ commute with $J_{\dia}$.

Much as in section~7 of \cite{R22} we can form the Clifford bundle over $G/K$ for $g_0$, except that here we use the complex Clifford algebra that was discussed in the previous section instead of the real Clifford algebra used in \cite{R22}.  The role of $\pi$ of the previous section is now taken by $\Ad$ restricted to $K$ and acting on 
$\fm$, and so also on $\bC\ell(\fm) = \bC\ell(\fm,g_0)$.
From now on we will denote this action of $K$ on  $\bC\ell(\fm)$ by $\widetilde\Ad$.  We set
\begin{equation}
\label{eq6.1}
\bC\ell(G/K) = \{c \in C^{\infty}(G,\bC\ell(\fm)): c(xs) = \widetilde\Ad_s^{-1}(c(x)) \text{ for } x \in G,\ s \in K\}.
\end{equation}
It is clearly an algebra for pointwise operations.  We let $A_{\bC} = C^{\infty}(G/K,\bC)$, and we see that not only is $\bC\ell(G/K)$ an algebra over $A_{\bC}$, but that in fact $A_{\bC}$ can be identified with the center of $\bC\ell(G/K)$ (since $\fm$ is even-dimensional).  Furthermore, $\bC\ell(G/K)$ contains the tangent bundle $\cT(G/K)$ of $G/K$ as a real (generating) subspace. On $\bC\ell(G/K)$ we have the action $\l$ of $G$ by translation, and this action defines the canonical connection, $\nabla^c$, on $\bC\ell(G/K)$, which acts by derivations (much as discussed in \cite{R22}). Clearly this $\nabla^c$ extends the $\nabla^c$ on $\cT(G/K)$.

Suppose now that $\nabla$ is some other connection on $\cT(G/K)$ that is $G$-invariant and compatible with $g_0$ (such as our earlier $\nabla^{\dia}$ when $G/K$ is a coadjoint orbit).  As seen in section~5 of \cite{R22}, especially corollary~5.2, $\nabla$ is then of the form $\nabla = \nabla^c + L$ where $L$ is a $G$-equivariant $A$-homomorphism from $\cT(G/K)$ into $\End_A^{sk}(\cT(G/K))$.  Here $\End_A^{sk}(\cT(G/K))$ denotes the $A$-endomorphisms of $\cT(G/K)$ that are skew-adjoint with respect to $g_0$.  As seen in proposition 3.1 of \cite{R22}, each such endomorphism $L_V$ for $V \in \cT(G/K)$ is given by a smooth function on $G$ whose values are in $so(\fm,g_0)$, which we denote again by $L_V$, and which satisfies the condition
\[
L_V(xs) = \Ad_s^{-1} \circ L_V(x) \circ \Ad_s
\]
for $x \in G$ and $s \in K$.  For any $V,W \in \cT(G/K)$ we have $(L_VW)(x) = (L_V(x)(W(x))$ for $x \in G$.  By equation \eqref{eq5.2} each $L_V$ will extend to a derivation of $\bC\ell(G/K)$, and in this way we obtain an $A$-linear (so $\bR$-linear) map from $\cT(G/K)$ into the Lie-algebra of derivations of $\bC\ell(G/K)$.  (These derivations will, in fact, be $*$-derivations for the involution determined by the involution on $\bC\ell(\fm)$ defined in Section~\ref{sec5}.)  We can now define a $G$-invariant connection, $\nabla$, on $\bC\ell(G/K)$ by $\nabla = \nabla^c + L$.  It clearly extends the original $\nabla$ on $\cT(G/K)$ (and $\d$ on $A_{\bC}$).  Again $\nabla_V$ will be a derivation of $\bC\ell(G/K)$ for each $V \in \cT(G/K)$.
Note that our construction of $\bC\ell(G/K)$ and its $\nabla$  does not use $J$.

We use $J$ in the way described in the previous section to define the complex Hilbert space $\cS = \cF(\fm_J)$ of spinors, with its compatible actions of $\bC\ell(\fm)$ and $K$.  We will again denote the action of $K$ on $\cS$ by $\widetilde\Ad$. We then define the canonical bundle $\cS(G/K)$ of spinor fields on $G/K$ for $J$ by
\begin{equation}
\label{eq6.2}
\cS(G/K) = \{\psi \in C^{\infty}(G,\cS): \psi(xs) = 
\widetilde\Ad_s^{-1}(\psi(x)) \text{ for } x \in G,\ s \in K\}.
\end{equation}
It is an $A_{\bC}$-module in the evident way (projective by proposition 2.2 of \cite{R22}).  

As explained in theorem 1.7i of \cite{Ply} and proposition 9.4 of \cite{GVF} and later pages, spinor bundles for Clifford bundles are not in general unique. The tensor product of a spinor bundle by a line bundle will be another spinor bundle, and all the spinor bundles are related in this way. Within our setting of equivariant bundles we need to tensor with $G$-equivariant line bundles. These correspond exactly to the characters, that is, one-dimensional representations, of $K$. (We will not discuss Dirac operators twisted by vector bundles of higher dimension.) From theorem 5.1 of \cite{R22} it is easily seen that $G$-invariant connections on a line bundle differ from the canonical connection by a constant. For our purposes we can ignore the constant. In fact, even the canonical connection need not appear explicitly. We proceed as follows. Let $\chi$ be a character of $K$. (We remark that when $G/K$ is a coadjoint orbit, $K$ always has nontrivial characters because $\ft$ is an ideal in $\fk$.) We set:
\begin{equation*}
\cS(G/K, \chi) = \{\psi \in C^{\infty}(G,\cS): \psi(xs) = 
\bar\chi(s)\widetilde\Ad_s^{-1}(\psi(x)) \text{ for } x \in G,\ s \in K\}.
\end{equation*}
On $\cS(G/K, \chi)$ we define an $A_{\bC}$-valued inner product in the usual way by
\[
\<\psi,\var\>_{A_{\bC}}(x) = \<\psi(x),\var(x)\>_{\cS}
\]
for $\psi,\var \in \cS(G/K, \chi)$.  Of greatest importance is the action $\k$ of $\bC\ell(G/K)$ on $\cS(G/K, \chi)$ that is defined by
\begin{equation}
\label{eq6.3}
(\k(c)\psi)(x) = \k(c(x))(\psi(x))
\end{equation}
for $x \in G$.  (We drop the subscript $J$ on $\k$ used in the previous section.)  This action carries $\cS(G/K, \chi)$ into itself because
\[
(\k(c)\psi)(xs) = (\k(\widetilde\Ad_s^{-1}(c(x))))
(\bar\chi(s)\widetilde\Ad_s^{-1}(\psi(x))) = \bar\chi(s)\widetilde\Ad_s^{-1}((\k(c)\psi)(x)),
\]
where the last equality follows from Proposition~\ref{prop5.4}.

On $\cS(G/K, \chi)$ we have the action $\l$ of $G$ by translation, and it is easily seen that $\k$ is compatible with this action and the $G$-action on $\bC\ell(G/K)$.  The action $\l$ defines a canonical connection on $\cS(G/K, \chi)$ by adapting \eqref{eq2.1} in the evident way.  We will denote this canonical connection again by $\nabla^c$.  Of prime importance, we have the Leibniz rule
\begin{equation}
\label{eq6.4}
\nabla_V^c(\k(c)\psi) = \k(\nabla_V^cc)\psi + \k(c)(\nabla_V^c\psi)
\end{equation}
for any $V \in \cT(G/K)$, $c \in \bC\ell(G/K)$ and $\psi \in \cS(G/K, \chi)$.  Furthermore, much as discussed in \cite{R22}, the connection on $\cS(G/K, \chi)$ is compatible with the $A_{\bC}$-valued inner product in the sense of the Leibniz rule
\begin{equation}
\label{eq6.5}
\d_V(\<\psi,\var\>_{A_{\bC}}) = \<\nabla_V^c\psi,\var\>_{A_{\bC}} + \<\psi,\nabla_V^c\var\>_{A_{\bC}}
\end{equation}
for any $V \in \cT(G/K)$ and $\psi,\var \in \cS(G/K, \chi)$.  The only property of $J$ that is used for this is the evident fact that when we view $J$ as acting on $\cT(G/K)$ pointwise, it commutes with the translation action of $G$.

Suppose now that our original $\nabla$ on $\cT(G/K)$ commutes with $J$, in the sense that each $\nabla_V$ does.   As before, set $\nabla = \nabla^c + L$. Since also $\nabla^c$ commutes with $J$, each $L_V$ will commute with $J$, that is, $L_V(x) \in u(\fm_J)$ for each $x \in G$.  Then, as discussed in the previous section, each $L_V(x)$ will extend to a derivation of the exterior algebra $\cS = \cF(\fm_J)$, and consequently $L_V$ determines an $A$-module endomorphism of $\cS(G/K, \chi)$, which we denote by $L_V^{\cS}$.  In this way we define a $G$-equivariant $A$-linear map $L^{\cS}$ from $\cT(G/K)$ into the algebra of $A_\bC$-endomorphisms of $\cS(G/K, \chi)$.  Furthermore, it is easily checked that each $L_V^{\cS}$ is skew-adjoint for the $A_{\bC}$-valued inner product on $\cS(G/K, \chi)$.  Of most importance, we see from Corollary~\ref{cor5.7} that $L^{\cS}$ is compatible with the action 
of $\bC\ell(G/K)$ on $\cS(G/K, \chi)$ in the sense of the Leibniz rule
\[
L_V^{\cS}(\k(c)\psi) = \k(L_Vc)\psi + \k(c)(L_V^{\cS}\psi)
\]
for all $V$, $c$ and $\psi$.  We saw in \eqref{eq6.4} that $\nabla^c$ satisfies a similar identity, and so we have obtained:

\setcounter{theorem}{5}
\begin{proposition}
\label{prop6.6}
Let $(G/K,g_0,J)$ be almost Hermitian, and let $\chi$ be a character of $K$. Let $\nabla$ be a $G$-invariant connection on $\cT(G/H)$ that is compatible with $g_0$ and commutes with $J$.  Let $\nabla$ also denote its extension to a connection on $\bC\ell(G/K)$ as constructed above, and let $\nabla^{\cS}$ denote the corresponding connection on $\cS(G/K, \chi)$ constructed above using $J$.  Then the Leibniz rule
\[
\nabla_V^{\cS}(\k(c)\psi) = \k(\nabla_Vc)\psi + \k(c)(\nabla_V^{\cS}\psi)
\]
holds for all $V \in \cT(G/K)$, $c \in \bC\ell(G/K)$ and $\psi \in \cS(G/K, \chi)$.  Furthermore, $\nabla^{\cS}$ is compatible with the $A_{\bC}$-valued inner product on $\cS(G/K, \chi)$ from $g_0$.
\end{proposition}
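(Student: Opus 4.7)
The plan is to decompose $\nabla = \nabla^c + L$ (on both $\cT(G/K)$ and $\bC\ell(G/K)$) and $\nabla^{\cS} = \nabla^c + L^{\cS}$, and verify each of the two assertions separately on the $\nabla^c$-piece and the $L$-piece. Both assertions are pointwise in $x \in G$: every operator in sight acts by first evaluating all arguments at $x$ and then applying a Lie-algebraic operation on $\bC\ell(\fm)$ and $\cS$. So the work reduces to the algebraic identities already established in Section \ref{sec5}, together with equations \eqref{eq6.4} and \eqref{eq6.5} for the canonical piece.

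For the first (Leibniz) assertion, the $\nabla^c$-piece is precisely \eqref{eq6.4}, which extends verbatim to $\cS(G/K,\chi)$ because the $\bar\chi(s)$ factor in the equivariance condition is a scalar that commutes with $\k(c(x))$ and drops out when one differentiates in the direction $V(x) \in \fm$ at the fixed coset. The $L$-piece reads pointwise
\[
L_V(x)\bigl(\k(c(x))\psi(x)\bigr) = \k\bigl(L_V(x)(c(x))\bigr)\psi(x) + \k(c(x))\bigl(L_V(x)\psi(x)\bigr),
\]
which is exactly \eqref{eq5.8} of Corollary \ref{cor5.7} applied to $L_V(x) \in u(\fm_J)$, $c(x) \in \bC\ell(\fm)$, $\psi(x) \in \cS$. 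Since $(L_V^{\cS}\psi)(x) = L_V(x)(\psi(x))$ and $(L_V c)(x) = L_V(x)(c(x))$ by construction, this is the Leibniz rule for $L^{\cS}$; adding to the $\nabla^c$-piece yields the stated identity.

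For the compatibility of $\nabla^{\cS}$ with the $A_{\bC}$-valued inner product, the $\nabla^c$-piece is \eqref{eq6.5}, again extended to $\cS(G/K,\chi)$ without change since $|\chi(s)| = 1$ makes the Hermitian form insensitive to the twist. What remains is the identity
\[
\<L_V^{\cS}\psi,\var\>_{A_{\bC}} + \<\psi,L_V^{\cS}\var\>_{A_{\bC}} = 0,
\]
i.e.\ pointwise skew-adjointness of $L_V(x)$ on $\cS = \cF(\fm_J)$. This is already noted in the paragraph preceding the proposition, and rests on the standard fact that a skew-Hermitian operator on $\fm_J$ extends as a derivation to a skew-Hermitian operator on $\cF(\fm_J)$ for the inner product \eqref{eq5.3}: indeed $\exp(tL_V(x)) \in U(\fm_J)$, and by Proposition \ref{prop5.4} this one-parameter group acts unitarily on $\cF(\fm_J)$, so its generator $L_V(x)$ is skew-adjoint. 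Summing the two pieces gives the desired compatibility.

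The proof is essentially book-keeping, with all algebraic content already packaged into Section \ref{sec5}. The only point requiring minor care is verifying that the character twist $\chi$ interacts trivially with each step: the $\chi$-equivariance is preserved by $\nabla^c$ (a routine chain-rule computation using $V(xs) = \Ad_s^{-1}V(x)$), by $L_V^{\cS}$ (since $L_V(x)$ is scalar-linear and commutes with scalars), and by $\k(c)$; and the scalar $\bar\chi(s)$ of modulus one cancels in all inner products and Leibniz identities. There is no genuine obstacle.
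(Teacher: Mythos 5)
Your proof is correct and takes essentially the same route as the paper: decompose $\nabla = \nabla^c + L$, handle the canonical piece via \eqref{eq6.4} and \eqref{eq6.5}, and handle the $L$-piece via Corollary~\ref{cor5.7} together with the pointwise skew-adjointness of $L_V(x) \in u(\fm_J)$ on $\cS = \cF(\fm_J)$. The paper in fact gives no separate proof of Proposition~\ref{prop6.6}; the surrounding discussion \emph{is} the proof, and your write-up tracks it closely while adding a welcome explicit check that the character twist $\chi$ is inert throughout.
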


In terms of the connection $\nabla^{\cS}$ on $\cS(G/K, \chi)$ from $\nabla$ we can define the ``Dirac'' operator for $\nabla$ (which when $G/K$ is a coadjoint orbit will be the canonical Dirac operator for $\mu_\dia$ when $\chi$ is trivial, $\cS(G/K)$ is constructed using $J_\dia$, and $\nabla^\dia$ is the Levi--Civita connection for $g_\dia$).  Much as done in section~8 of \cite{R22}, for any $\psi \in \cS(G/K, \chi)$ we define $d\psi$ by $d\psi(V) = \nabla_V^{\cS}(\psi)$ for $V \in \cT(G/K)$.  Then we can view $d\psi$ as an element of $\cT^*(G/K) \otimes_{\bR} \cS(G/K, \chi)$, where $\cT^*(G/K)$ denotes the $A$-module of smooth cross-sections of the cotangent bundle.  By means of the Riemannian metric $g_0$ (as $A$-valued inner product) we can identify $\cT^*(G/K)$ with $\cT(G/K)$.  When $d\psi$ is viewed by this identification as an element of $\cT(G/K) \otimes_{\bR} \cS(G/K, \chi)$ we will denote it, with some abuse of notation, by $\grad^0\psi$.  Let us view the Clifford action $\k$ of $\bC\ell(G/K)$ on $\cS(G/K, \chi)$ as a bilinear mapping from $\bC\ell(G/K)\otimes_{\bC} \cS(G/K, \chi)$ into $\cS(G/K, \chi)$.  We can view $\cT(G/K)$ as a real subspace of $\bC\ell(G/K)$ in the evident way, and so we can view $\cT(G/K) \otimes_{\bR} \cS(G/K, \chi)$ as a real subspace of $\bC\ell(G/K) \otimes_\bC \cS(G/K, \chi)$.  In this way we view $\grad_{\psi}^0$ as an element of $\bC\ell(G/K) \otimes_{\bC} \cS(G/K, \chi)$, to which we can apply $\k$.

\begin{definition}
\label{def6.7}
Let $(G/K,g_0,J)$ be almost Hermitian, and let $\chi$ be a character of $K$. Let $\nabla$ be a $G$-invariant  connection on $\cT(G/K)$ that is compatible with $g_0$ and commutes with $J$.  Then the Dirac operator, $D^{\nabla}$, for $\nabla$ and $\chi$ is defined on $\cS(G/K, \chi)$ by
\[
D^{\nabla}\psi = \k(\grad_{\psi}^0).
\]
\end{definition}

We remind the reader that $\k$ depends on the choice of $g_0$ and $J$, and that $\grad^0\psi$ also depends on the choice of $\nabla$.

In the setting of Definition~\ref{def6.7} we can use a standard module frame $\{W_j\}$ for $\cT(G/K)$ and $g_0$ to give a more explicit description of $\grad_{\psi}^0$, namely
\[
\grad_{\psi}^0 = \sum_j W_j \otimes (\nabla_{W_j}^{\cS}\psi).
\]
(See the paragraph of \cite{R22} containing equation 8.2.)  In terms of $\{W_j\}$ we can then write $D^{\nabla}$ as
\setcounter{equation}{7}
\begin{equation}
\label{eq6.8}
D^{\nabla}\psi = \sum_j \k(W_j)(\nabla_{W_j}^{\cS}\psi).
\end{equation}
These expressions for $\grad_{\psi}^0$ and $D^{\nabla}$ are, of course, independent of the choice of standard module frame.  This can be seen directly by using Proposition~\ref{prop2.9}.

\setcounter{theorem}{8}
\begin{proposition}
\label{prop6.9}
The operator $D^{\nabla}$ commutes with the action of $G$ on $\cS(G/K, \chi)$ by translation, and anti-commutes with the chirality operator $\k(\g)$.
\end{proposition}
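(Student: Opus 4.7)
The plan is to handle the two assertions separately, using the frame expansion \eqref{eq6.8} $D^\nabla \psi = \sum_j \k(W_j)(\nabla^{\cS}_{W_j}\psi)$ and the frame-independence guaranteed by Proposition~\ref{prop2.9} applied to the $A_{\bC}$-bilinear form $(V,W) \mapsto \k(V)\nabla^{\cS}_W$ on $\cT(G/K)$.

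For $G$-equivariance, I would first observe that when $\{W_j\}$ is a standard module frame for $\cT(G/K)$ with respect to $g_0$, so is $\{\l_x W_j\}$ for every $x \in G$: applying $\l_x$ to the reproducing identity $V = \sum_j W_j\<W_j,V\>_A$ and using the $G$-invariance of $g_0$ yields the reproducing identity for $\{\l_x W_j\}$ acting on $\l_x V$, whence on arbitrary $V$ by surjectivity of $\l_x$. Next I would record the equivariance of the individual ingredients: $\nabla^c$ commutes with $\l$ by its very definition \eqref{eq2.1}, the perturbation $L$ is $G$-equivariant (it is the defining feature of the setup in Section~\ref{sec5} and Section~\ref{sec6} that $\nabla$ is $G$-invariant in the sense $\l_x \nabla_V = \nabla_{\l_x V}\l_x$), and $\k$ is pointwise so $\l_x(\k(c)\psi) = \k(\l_x c)(\l_x \psi)$. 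Combining these gives $\l_x(D^\nabla \psi) = \sum_j \k(\l_x W_j)(\nabla^{\cS}_{\l_x W_j}(\l_x \psi))$, which equals $D^\nabla(\l_x \psi)$ by frame-independence applied to the new frame $\{\l_x W_j\}$.

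For anti-commutation with $\k(\g)$, the two key facts are (i) $\k(W_j(x))\k(\g) = -\k(\g)\k(W_j(x))$ pointwise, because at the end of Section~\ref{sec5} we noted $\g X \g = -X$ for every $X \in \fm$, and $W_j$ takes values in $\fm \subset \bC\ell(\fm)$; and (ii) $\k(\g)$ commutes with $\nabla^{\cS}_V$ for every $V \in \cT(G/K)$. To establish (ii) I would split $\nabla^{\cS} = \nabla^c + L^{\cS}$: the canonical piece $\nabla^c$ commutes with $\k(\g)$ because $\g$ is a fixed element of $\bC\ell(\fm)$ and $\nabla^c$ acts as a translation derivative fiberwise, while the perturbation $L^{\cS}_V$ arises from $L_V(x) \in u(\fm_J)$ whose derivation action on $\bC\ell(\fm)$ annihilates $\g$ (again from the end of Section~\ref{sec5}); by the Leibniz rule of Corollary~\ref{cor5.7} this forces $L^{\cS}_V$ to commute with $\k(\g)$ on $\cS(G/K,\chi)$. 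Putting (i) and (ii) together,
\[
D^\nabla(\k(\g)\psi) = \sum_j \k(W_j)\k(\g)(\nabla^{\cS}_{W_j}\psi) = -\k(\g)\sum_j \k(W_j)(\nabla^{\cS}_{W_j}\psi) = -\k(\g) D^\nabla\psi.
\]

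I expect no deep obstacle here; the only delicate point is being careful that the bilinear form in frame-independence is $A_{\bC}$-bilinear in \emph{both} slots, which holds because $\k$ is $A_{\bC}$-linear (since $A_{\bC}$ lies in the center of $\bC\ell(G/K)$) and because the connection $\nabla^{\cS}$ is $A$-linear in its subscript argument. Once that observation is in place, both parts of the proposition reduce to one-line verifications as above.
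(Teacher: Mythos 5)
Your proof is correct and takes essentially the same approach as the paper's. The only cosmetic difference is in establishing that $\k(\g)$ commutes with $\nabla^{\cS}_V$: the paper shows $\nabla_V\g=0$ on $\bC\ell(G/K)$ and then applies the Leibniz rule of Proposition~\ref{prop6.6} once, while you handle $\nabla^c$ and $L^{\cS}$ separately via Corollary~\ref{cor5.7}, but both rest on the same underlying facts (constancy of $\g$, $L_V(x)\in u(\fm_J)$ annihilating $\g$, and $\g X\g=-X$ for $X\in\fm$).
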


\begin{proof}
The commutation with the action of $G$ is easily verified, much as done in the paragraph after equation 8.2 of \cite{R22}.  As to $\k(\g)$, we are viewing $\g$ as a constant field in $\bC\ell(G/K)$, and because $L_X \in u(\fm_J)$ for each $X \in \fm$ we have $L_V\g = 0$, as seen near the end of the previous section.  Since $\g$ is constant, we also clearly have $\nabla_V^c\g = 0$, and thus $\nabla_V\g = 0$ for all $V \in \cT(G/K)$.  Then from Proposition~\ref{prop6.6} we see that $\k(\g)$ commutes with $\nabla_V^{\cS}$ for each $V \in \cT(G/K)$.  Since $\g$ anti-commutes with each $X \in \fm \subset \bC\ell(\fm)$, it follows easily that $D^{\nabla}$ anti-commutes with $\k(\g)$.
\end{proof}

We have been viewing $\cS(G/K, \chi)$ as a right $A_{\bC}$-module.  But since $A_{\bC}$ is commutative we can equally well view $\cS(G/K, \chi)$ as a left $A_{\bC}$-module, and this view is quite natural when we view $A_{\bC}$ as the center of $\bC\ell(G/K)$.  For any $f \in A_{\bC}$ we let $M_f$ denote the operator on $\cS(G/K, \chi)$ consisting of pointwise multiplication by $f$, viewed as acting on the left of $\cS(G/K, \chi)$.  By means of $g_0$ we can identify $df$ (defined by $df(V) = \d_V(f)$) with an element of $\cT_{\bC}(G/K)$, which we denote by $\grad_f^0$ since it is the usual gradient of $f$ for $g_0$.  In terms of a standard module frame, $\{W_j\}$, for $\cT(G/K)$ we have
\[
\grad_f^0 = \sum_j (\d_{W_j}f)W_j,
\]
with the evident meaning considering that $\d_{W_j}f$ is $\bC$-valued.  Then it is easily seen, much as in the proof of proposition~8.3 of \cite{R22}, that:

\begin{proposition}
\label{prop6.10}
For any $f \in A_{\bC}$ and $\psi \in \cS(G/K, \chi)$ we have
\[
[D^{\nabla},M_f]\psi = \k(\grad_f^0)(\psi).
\]
\end{proposition}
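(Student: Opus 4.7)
The plan is to compute $D^{\nabla}(M_f\psi)$ directly from the frame formula \eqref{eq6.8} and then extract the commutator.  Fix a standard module frame $\{W_j\}$ for $\cT(G/K)$ with respect to $g_0$.  Then
\[
D^{\nabla}(M_f\psi) \ =\ \sum_j \k(W_j)\bigl(\nabla_{W_j}^{\cS}(f\psi)\bigr).
\]
Since $\nabla^{\cS}$ is compatible with the $A_{\bC}$-valued inner product, and hence in particular satisfies the Leibniz rule $\nabla_V^{\cS}(f\psi) = (\d_V f)\psi + f\,\nabla_V^{\cS}\psi$ for $f \in A_{\bC}$, substituting gives
\[
D^{\nabla}(M_f\psi) \ =\ \sum_j \k(W_j)\bigl((\d_{W_j}f)\psi\bigr) \ +\ \sum_j \k(W_j)\bigl(f\,\nabla_{W_j}^{\cS}\psi\bigr).
\]

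Next I would handle the two sums separately.  For the first sum, I use that $\k$ is $A_{\bC}$-linear, so that $\k(W_j)((\d_{W_j}f)\psi) = \k\bigl((\d_{W_j}f)W_j\bigr)\psi$; summing and comparing with the frame expansion $\grad_f^0 = \sum_j(\d_{W_j}f)W_j$, we recognize the first sum as $\k(\grad_f^0)\psi$.  For the second sum, I use that $A_{\bC}$ sits in the center of $\bC\ell(G/K)$, so that multiplication by $f$ commutes through $\k(W_j)$, giving $\sum_j \k(W_j)(f\nabla_{W_j}^{\cS}\psi) = f\sum_j \k(W_j)(\nabla_{W_j}^{\cS}\psi) = M_f D^{\nabla}\psi$.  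Rearranging yields the desired identity $[D^{\nabla},M_f]\psi = \k(\grad_f^0)\psi$.

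There is no real obstacle here beyond keeping the conventions straight; the main thing to verify carefully is that $\nabla^{\cS}$ does satisfy the Leibniz rule in the scalar $f \in A_{\bC}$ (which follows from the way $\nabla^{\cS} = \nabla^c + L^{\cS}$ was constructed, since $L_V^{\cS}$ is $A_{\bC}$-linear and $\nabla^c$ differentiates scalar functions via $\d_V$), and that the frame-independence in \eqref{eq6.8} legitimizes writing the result in coordinate-free form via Proposition~\ref{prop2.9}.
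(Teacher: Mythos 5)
Your computation is correct and is essentially the proof the paper has in mind (the paper defers to the analogous Proposition~8.3 of \cite{R22}, which is the same frame calculation for the Hodge--Dirac operator). One small phrasing slip: the clause ``compatible with the $A_{\bC}$-valued inner product, and \emph{hence} satisfies the Leibniz rule $\nabla_V^{\cS}(f\psi) = (\d_V f)\psi + f\nabla_V^{\cS}\psi$'' is a non sequitur, since inner-product compatibility is a different property; but you yourself supply the correct justification in the final paragraph ($\nabla^{\cS}=\nabla^c+L^{\cS}$ with $L^{\cS}_V$ being $A_{\bC}$-linear and $\nabla^c$ differentiating scalars by $\d_V$), so the argument stands as written once that remark is read as the actual reason.
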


For the reader's convenience we now basically repeat the comments made right after the proof of theorem~8.4 of \cite{R22}. Let the Hilbert space $L^2(G/K, \cS)$ be defined in terms of the $G$-invariant measure on $G/K$ from that on $G$. By choosing a fundamental domain in $G$ we can view $\cS(G/K, \chi)$ as a dense subspace of $L^2(G/K, \cS)$. In this way $D^\nabla$ can be viewed as an unbounded operator on $L^2(G/K, \cS)$. Note that for different choices of $\chi$ the spectrum of $D^\nabla$ can be quite different.  We equip $\cS(G/K, \chi)$ with the inner product from $L^2(G/K, \cS)$, which will just be $\int_G \<\psi,\var\>_{A_{\bC}}$. For $f \in A_\bC$ we let $M_f$ denote also the corresponding operator on $L^2(G/K, \cS)$ by pointwise multiplication.  From Proposition~\ref{prop6.10} we see that the operator norm of the commutator $[D^{\nabla},M_f]$ is the same as that of $\k(\grad_f^0)$ as an operator on $\cS(G/K)$. Recall from equation \ref{eq5.9} that $\k$ is a $*$-representation.
For any $c \in \bC\ell(G/K)$ let $\|\k(c)\|$ denote the operator norm of $\k(c)$ as an operator on $\cS(G/K, \chi)$.  Then by the $C^*$-identity $\|T\|^2 =\|T^*T\|$ we see that $\|\k(c)\|^2 = \|\k(c^*c)\|$.  When $c = V \in \cT(G/K)$ this means that
\[
\|\k(V)\|^2 = \|\k(\<V,V\>_A)\| = \|M_{\<V,V\>_A}\| = \|\<V,V\>_A\|_{\infty} = \|V\|_{\infty}^2,
\]
for the evident meaning of the last term, where $\|\cdot\|_{\infty}$ is just the usual supremum norm. Notice that this is independent of the choice of $\chi$ (basically reflecting the fact that the $C^*$-norm on a full matrix algebra is unique). When we apply this for $V = \grad_f^0$ we obtain
\[
\|\k(\grad_f^0)\| = \|\grad_f^0\|_{\infty}.
\]
Now a standard argument (e.g., following definition~9.13 of \cite{GVF}) shows that if we denote by $\rho$ the ordinary metric on a Riemannian manifold $N$ coming from its Riemannian metric, then for any two points $p$ and $q$ of $N$ we have
\[
\rho(p,q) = \sup\{|f(p)-f(q)|: \|\grad_f\|_\infty \le 1\}.
\]
On applying this to $G/K$ and using Proposition~\ref{prop6.10} and the discussion following its proof, we obtain, for $\rho$ now the ordinary metric on $G/K$ from our Riemannian metric $g_0$,
\[
\rho(p,q) = \sup\{|f(p)-f(q)|: \|[D^{\nabla},M_f]\| \le 1\}.
\]
This is the formula on which Connes focused for general Riemannian manifolds \cite{Cn7,Cn3}, as it shows that the Dirac operator contains all of the metric information (and much more) for the manifold.  This is his motivation for advocating that metric data for ``non-commutative spaces'' be encoded by providing them with a ``Dirac operator''.  But we should notice that our Dirac operators above may not be formally self-adjoint.  We deal with that issue in the next section.

We remark that the first part of Proposition~\ref{prop6.9} is the manifestation in terms of $D^{\nabla}$ of the fact that the ordinary metric on $G/K$ for $g_0$ is invariant for the action of $G$ on $G/K$.

At this point it is clear that we can combine the construction of this section with the formula in Theorem~\ref{thm3.3} for the Levi--Civita connection for a coadjoint orbit to obtain a fairly explicit formula for the canonical Dirac operator for the coadjoint orbit of $\mu \in \fg'$. But we refrain from writing this formula here as it is somewhat lengthy, and we do not need it for the next section.


\section{The formal self-adjointness of the Dirac operator}
\label{sec7}

By definition, $D^\nabla$ will be formally self-adjoint if
\[
\<D^{\nabla}\psi,\var\> = \<\psi,D^{\nabla}\var\>
\]
for any $\var,\psi \in \cS(G/K, \chi)$, where the inner product is that from $L^2(G/K,\cS)$.
Recall that the torsion, $T_{\nabla}$, of a connection $\nabla$ on $\cT(G/K)$ is defined by
\[
T_{\nabla}(V,W) = \nabla_V W - \nabla_W V - [V,W]
\]
for $V,W \in \cT(G/K)$. Note that $[V,W]$ is defined as the commutator of derivations of $A$, and that when elements of $\cT(G/K)$ are viewed as functions as we have been doing, then $[V,W]$ is not defined pointwise, but rather has a somewhat complicated expression in terms of $V$ and $W$. But in section 6 of \cite{R22} it is seen that the function $[V,W]$ can be readily calculated when $V$ and $W$ are fundamental vector fields, and we will use this fact later. It is not difficult to see that $T_\nabla$ is $A$-bilinear. (See \S 8 of chapter 1 of \cite{Hlg}.) 
For any $U \in \cT(G/K)$ let $T_\nabla^U$ be the $A$-endomorphism of
$\cT(G/K)$ defined by $T_\nabla^U(V) = T_\nabla(U, \ V)$. We can define 
$\mathrm{trace}(T^U_\nabla)$ by
$\mathrm{trace}(T^U_\nabla) = \sum_j g_0(T_\nabla(U, W_j), W_j )$
for one (hence every, by Proposition \ref{prop2.9})
standard module frame $\{W_j\}$ for $\cT(G/K)$ equipped with $g_0$.

The purpose of this section is to prove the following theorem, and to obtain some of its consequences.
As we will see, this theorem is closely related to the main theorem of \cite{Ikd}, which deals with the case in which $G/K$ is spin. (See also \cite{FrS}.)

\begin{theorem}
\label{th7.1}
Let $(G/K,g_0, J)$ be almost Hermitian, and let $\chi$ be a character of $K$. Let $\nabla$ be a $G$-invariant connection on $\cT(G/K)$ that is compatible with $g_0$ and commutes with $J$, so that we can define the Dirac operator $D^{\nabla}$ on $L^2(G/K,\cS)$, with domain $\cS(G/K, \chi)$, as explained in the previous section.  Then $D^{\nabla}$ is formally self-adjoint if and only if
\[
\mathrm{trace}(T_\nabla^U)= 0
\]
for every $U \in \cT(G/K)$.
\end{theorem}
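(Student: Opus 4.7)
The plan is to evaluate $\int_{G/K}(\<D^\nabla\psi,\varphi\>_{A_\bC} - \<\psi, D^\nabla\varphi\>_{A_\bC})$ using the explicit formula $D^\nabla\psi = \sum_j \k(W_j)\nabla^\cS_{W_j}\psi$ from \eqref{eq6.8}, for a standard module frame $\{W_j\}$ of $\cT(G/K)$ with respect to $g_0$. First I would apply the skew-adjointness $\k(W_j)^* = -\k(W_j)$ from \eqref{eq5.9} to move $\k(W_j)$ across the inner product, then invoke compatibility of $\nabla^\cS$ with the $A_\bC$-valued inner product and the Leibniz rule for $\nabla^\cS$ and $\k$ from Proposition~\ref{prop6.6}, to obtain
\[
\<D^\nabla\psi,\varphi\>_{A_\bC} = -\sum_j \d_{W_j}\<\psi, \k(W_j)\varphi\>_{A_\bC} + \big\<\psi,\k(\textstyle\sum_j\nabla_{W_j}W_j)\varphi\big\>_{A_\bC} + \<\psi, D^\nabla\varphi\>_{A_\bC}.
\]
To absorb the first sum I would introduce the complex vector field $V_{\psi,\varphi} := \sum_k W_k \<\psi, \k(W_k)\varphi\>_{A_\bC}$, which by the reproducing property of the frame satisfies $g_0(V_{\psi,\varphi}, W) = \<\psi, \k(W)\varphi\>_{A_\bC}$ for every $W \in \cT(G/K)$. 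Compatibility of the Levi--Civita connection $\nabla^0$ with $g_0$ then gives
\[
\div(V_{\psi,\varphi}) = \sum_j \d_{W_j}\<\psi,\k(W_j)\varphi\>_{A_\bC} - \sum_j \<\psi, \k(\nabla^0_{W_j}W_j)\varphi\>_{A_\bC},
\]
and applying Theorem~\ref{th2.10} (extended $\bC$-linearly to complex vector fields) integrates the first sum away. Assembling the pieces yields
\[
\int_{G/K}\<D^\nabla\psi,\varphi\>_{A_\bC} - \int_{G/K}\<\psi, D^\nabla\varphi\>_{A_\bC} = -\int_{G/K}\<\psi, \k(B)\varphi\>_{A_\bC},
\]
where $B := \sum_j(\nabla^0_{W_j}W_j - \nabla_{W_j}W_j) \in \cT(G/K)$ (independent of the frame by Proposition~\ref{prop2.9}).

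Next I would argue that the right-hand side vanishes for every $\psi,\varphi \in \cS(G/K,\chi)$ if and only if $\k(B) = 0$ as a pointwise Clifford-multiplication operator on $L^2(G/K,\cS)$, which by faithfulness of the irreducible representation $\k$ of $\bC\ell(\fm)$ is equivalent to $B$ vanishing identically. To translate $B = 0$ into the trace-of-torsion condition, set $A := \nabla^0 - \nabla$, which is a tensorial, $g_0$-skew, $\End_A(\cT(G/K))$-valued one-form. Since $\nabla^0$ is torsion-free, $T_\nabla(V,W) = A_W V - A_V W$. Because $\{W_j(x)\}$ is a Parseval frame on $(\fm,g_0)$ at each $x$, the sum $\sum_j g_0(M W_j, W_j)$ computes the pointwise trace of any $A$-endomorphism $M$ of $\cT(G/K)$ and so vanishes when $M = A_U$ is skew; combined with the identity $g_0(A_{W_j}U, W_j) = -g_0(U, A_{W_j}W_j)$ this gives $\mathrm{trace}(T_\nabla^U) = -g_0(U,B)$, so by non-degeneracy of $g_0$ the condition ``$\mathrm{trace}(T_\nabla^U) = 0$ for all $U$'' is equivalent to $B = 0$.

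The principal obstacle I anticipate is the passage from ``the integral $\int_{G/K}\<\psi, \k(B)\varphi\>_{A_\bC}$ vanishes for all $\psi,\varphi \in \cS(G/K,\chi)$'' to ``$\k(B) = 0$ pointwise''. This requires using density of $\cS(G/K,\chi)$ in $L^2(G/K,\cS)$ and $G$-equivariance to ensure that spinor sections can probe $\k(B(x))$ at every point, together with faithfulness of the irreducible Clifford representation. The remainder of the argument is careful bookkeeping of the Leibniz rules, the skew-adjointness of $\k(W_j)$ and of $A_V$, and the Parseval-frame trace identities.
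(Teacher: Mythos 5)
Your proposal is correct, and it reorganizes the paper's argument in a genuinely different way while sharing the core ingredients (the Leibniz-rule manipulation, the divergence theorem, and the Parseval-frame trace identities). The difference is in how one shows that the pairs $(\psi,\varphi)$ provide enough data. The paper rewrites the boundary term as $\pm\sum_j\<\nabla_{W_j}U,W_j\>_{A_\bC}$ for a $U\in\cT_\bC(G/K)$ attached to $(\psi,\varphi)$, then proves a dedicated Lemma~\ref{lem7.3a} by constructing explicit spinors $\varphi(x)=h(x)1_\cS$ and $\psi(x)=h(x)U(x)$ (with $h$ a cross-section of the $\chi$-line bundle) to show those $U$'s span $\cT_\bC(G/K)$, and only then brings in the torsion. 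You instead apply the divergence theorem immediately to $V_{\psi,\varphi}$, collapse the obstruction into the single field $B=\sum_j(\nabla^0_{W_j}W_j-\nabla_{W_j}W_j)$, and defer the surjectivity question. Your sketch there should be made precise: $L^2$-density alone will not turn a vanishing integral into pointwise vanishing; the step that works is to substitute $\psi\mapsto\psi f$ for arbitrary $f\in A_\bC$ to deduce $\<\psi,\k(B)\varphi\>_{A_\bC}\equiv 0$, and then use surjectivity of fiber evaluation for the projective module $\cS(G/K,\chi)$, plus faithfulness of $\k_J$ on $\bC\ell(\fm)$, to conclude $B\equiv 0$. That route sidesteps the explicit section construction in Lemma~\ref{lem7.3a} and is arguably more elementary. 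Your torsion bookkeeping (writing $\Theta:=\nabla^0-\nabla$ to avoid clashing with the function algebra $A$) is correct: $\Theta_V$ is $g_0$-skew since both connections are metric, $T_\nabla(V,W)=\Theta_W V-\Theta_V W$, and $\mathrm{trace}(T^U_\nabla)=-g_0(U,B)$. The paper's Corollary~\ref{cor7.7} states the same criterion via $L=\nabla-\nabla^c$; the two agree because $\sum_j\nabla^0_{W_j}W_j=\sum_j\nabla^c_{W_j}W_j$, a fact embedded in the proof of Theorem~\ref{th2.10}.
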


\begin{proof}
We try to follow the path of the proof of theorem~8.4 in the latest revised arXiv version of \cite{R22}.  (The published version of this paper has a serious error in the proof of theorem~8.4, and that error is corrected in the most recent arXiv version.)  We use first the Leibniz rule of Proposition~\ref{prop6.6} and then the compatibility of $\nabla^{\cS}$ with the inner product to calculate that for $\psi,\var \in \cS(G/K, \chi)$ we have
\setcounter{equation}{1}
\begin{equation}
\label{eq7.2}
\begin{aligned}
&\<D^{\nabla}\psi, \ \var\>_{A_{\bC}} - \<\psi, \ D^{\nabla}\var\>_{A_{\bC}} \\
&= \sum_j (\<\k(W_j)(\nabla_{W_j}^{\cS}\psi), \ \var\>_{A_{\bC}} - \<\psi, \ \k(W_j)(\nabla_{W_j}^{\cS}\var)\>_{A_{\bC}}) \\
&= \sum_j (-\<\nabla_{W_j}^{\cS}\psi, \ \k(W_j)\var\>_{A_{\bC}}) - \<\psi, \ \nabla_{W_j}^{\cS}(\k(W_j)\var) - \k(\nabla_{W_j}W_j)\var\>_{A_{\bC}}) \\
&= -\sum_j \d_{W_j}(\<\psi, \ \k(W_j)\var\>_{A_{\bC}}) + \<\psi, \ \k(\sum_j\nabla_{W_j}W_j)\var\>_{A_{\bC}}.
\end{aligned}
\end{equation}
For given $\psi$ and $\var$ the function $V \mapsto \<\psi,\k(V)\var\>_{A_{\bC}}$ for $V \in \cT(G/K)$ is $A$-linear. It is \emph{not} $\bC$-linear for the complex structure on $\cT(G/K)$ from $J$, because $\k$ is not $\bC$-linear, as was mentioned immediately after the definition of $\k = \k_J$ in Section \ref{sec5}.
Of course, the above function does extend to an $A_\bC$-linear function from the complexification, $\cT_\bC(G/K)$, of $\cT(G/K)$. We equip $\cT_\bC(G/K)$ with the complexification of the inner product on $\cT(G/K)$ from $g_0$. Clearly $\cT_\bC(G/K)$ corresponds to the ``induced bundle'' for the $\Ad$-action of $K$ on the complexification, $\fm_\bC$, of $\fm$. Every $A_\bC$-linear function from $\cT_\bC(G/K)$ into $A_\bC$ is represented through the inner product by an element of $\cT_\bC(G/K)$. (See, e.g., proposition 7.2 of \cite{R17}.) Thus there is a $U \in \cT_\bC(G/K)$ such that 
 $\<\psi,\k(V)\var\>_{A_{\bC}} = \<U,V\>_{A_{\bC}}$ for all $V \in \cT(G/K)$. 
 
 \setcounter{theorem}{2}
\begin{lemma}
\label{lem7.3a}
Let $\mathcal{E}$ denote the $\bC$-linear span of the $U$'s that arise as above from pairs $(\psi, \phi)$ of elements of $ \cS(G/K, \chi)$. Then $\mathcal E = \cT_\bC(G/H)$.
\end{lemma}
\begin{proof}
It suffices to show that $\cT(G/K)$ is in $\mathcal E$. So let $U \in  \cT(G/K)$ be given. Let also a cross-section $h$ for the line-bundle for $\chi$ be given, so that $h \in C^\infty(G, \bC)$ and $h(xs) = \bar \chi(s)h(x)$ for all $x \in G$ and $s \in K$. Let $1_\cS$ denote the identity element of $\cS = \cF(m_J)$, and let $\phi$ be defined by $\phi(x) = h(x) 1_\cS$. View $U$ as having values in $\fm_J$, and let $\psi$ be defined by $\psi(x) = h(x)U(x)$, using $J$ to define the $\bC$-space structure of $\fm_J$. Then both $\psi$ and $\phi$ are in $\cS(G/K, \chi)$. For any $V \in \cT(G/K)$ we then have
\[
(\k(V)\phi)(x) = ia_J^\dag(V(x))(h(x)1_\cS) = ih(x)V(x).
\]
Thus, with $\<\cdot, \cdot\>_J$ defined on $\fm_J$ as done in Section \ref{sec5} and on $\cS$ as done in equation \ref{eq5.3}, we have
\begin{eqnarray*}
\<\psi, \k(V)\phi\>_{A_\bC}(x) &=& i|h(x)|^2\<U(x), V(x)\>_J \\
&=& i|h(x)|^2(g_0(U(x), V(x)) + ig_0(JU(x), V(x))) .
\end{eqnarray*}
But because $\k(V)$ is a skew-adjoint operator on $\cS(G/K,\chi)$, we have
\[
\<\psi, \k(V)\phi\>_{A_\bC}^- = \<\k(V)\phi, \psi\>_{A_\bC} = - \<\phi, \k(V)\psi\>_{A_\bC} .
\]
Thus the real and imaginary parts of the function $V \mapsto \<\psi, \k(V)\phi\>_{A_\bC}$ are both in $\mathcal E$, and so that the function $V \mapsto |h|^2g_0(U,V)$ is in $\mathcal E$. Note that $|h|^2 \in A$. Now let $\{h_j\}$ be a standard module frame for the line-bundle for $\chi$, so that $\sum |h_j|^2 = 1$. Since each function $V \mapsto |h_j|^2g_0(U,V)$ is in $\mathcal E$, by summing them over $j$ we see that the function $V \mapsto g_0(U,V)$ is in $\mathcal E$, as desired.
\end{proof}
 
Now in terms of the $U$ for $\psi$ and $\var$ the expression \eqref{eq7.2} becomes:
\[
= -\sum_j \d_{W_j}(\<U,W_j\>_{A_{\bC}}) + \<U, \sum_j\nabla_{W_j}W_j\>_{A_{\bC}}) = \sum_j \<\nabla_{W_j}U,W_j\>_{A_{\bC}}.
\]
Thus from Lemma \ref{lem7.3a} we see that $D^{\nabla}$ is formally self-adjoint if and only if
\[
\int_{G/K} \sum_j \<\nabla_{W_j}U,W_j\>_{A_{\bC}} = 0
\]
for all $U \in \cT_{\bC}(G/K)$.  By expressing the real and imaginary parts of the inner product in terms of $g_0$, and expressing $U$ in terms of its real and imaginary parts, we see that we have obtained:

\begin{lemma}
\label{lem7.3}
With notation as above, the Dirac operator $D^{\nabla}$ is formally self-adjoint if and only if
\[
\int_{G/K} \sum_j g_0(\nabla_{W_j}U,W_j) = 0
\]
for all $U \in \cT(G/K)$ and one (hence every) standard module frame, $\{W_j\}$, for $\cT(G/K)$ and $g_0$.
\end{lemma}

Thus we have reduced the matter to a condition concerning $\nabla$ on $\cT(G/K)$, so $J$ is no
longer involved, we no longer need to consider the Clifford algebra and spinors, and we can work over $\bR$ from this point on.
From the definition of the torsion, $T_{\nabla}$, of $\nabla$ we have
\[
g_0(\nabla_{W_j}U, \ W_j) = g_0(\nabla_UW_j - T_{\nabla}(U,W_j) - [U,W_j], \ W_j)
\]
for each $j$.  Notice now that $\sum_j g_0(W_j,W_j)$  is independent of the choice of standard module frame, by the $A$-bilinearity of $g_0$ and by Proposition~\ref{prop2.9}.  For any given $x \in G$ we have
\[
\sum_j(g_0(W_j,W_j))(x) = \sum_jg_0(W_j(x),W_j(x)),
\]
and $\{W_j(x)\}$ forms a frame for $\fm$ with $g_0$.  By Proposition~\ref{prop2.9} the expression on the right is independent of the choice of frame for $\fm$, and so we can use an orthonormal basis for $\fm$ and $g_0$.  From this we see that
\[
\sum_j g_0(W_j,W_j) \equiv \dim(\fm).
\]
Consequently, by the compatibility of $\nabla$ with $g_0$, for any $U \in \cT(G/K)$ we have
\begin{align*}
0 = \d_U(\sum_j g_0(W_j,W_j)) &= \sum_j g_0(\nabla_UW_j,W_j) + g_0(W_j,\nabla_UW_j) \\
&= 2 \sum_j g_0(\nabla_UW_j,W_j).
\end{align*}
Thus
\[
\sum_j g_0(\nabla_UW_j,W_j) = 0.
\]
(We remark that this fact depends on the pointwise argument used just above, and that the analogous argument can fail for modules over a non-commutative $A$ that contains proper isometries.)
We see thus that
\[
\sum_j g_0(\nabla_{W_j}U,W_j) = -\sum_j g_0(T_{\nabla}(U,W_j),W_j) - \sum_j g_0([U,W_j],W_j).
\]

Let $\nabla^t$ be the Levi--Civita connection for $g_0$.  We can apply the above equation to $\nabla^t$ and use the fact that $\nabla^t$ is torsion-free to get an expression for the last term above.  In this way we find that
\[
\sum_j g_0(\nabla_{W_j}U,W_j) = -\sum_j g_0(T_{\nabla}(U,W_j),W_j) + \sum g_0(\nabla_{W_j}^tU,W_j).
\]
Because $\nabla^t$ is the Levi--Civita connection for $g_0$, the sum $\sum_j g_0(\nabla^t_{W_j}U,W_j)$ is exactly $\div(U)$ as defined in Definition~\ref{def2.7}.  From the divergence theorem that was proved in Theorem~\ref{th2.10} we have
\[
\int_{G/K} \sum_j g_0(\nabla_{W_j}^tU,W_j) = 0
\]
for all $U \in \cT(G/K)$.  Thus we see that $D^\nabla$ is formally self-adjoint exactly
if
\[
\int_{G/K} \sum_j g_0(T_{\nabla}(U,W_j),W_j) = 0
\]
for all  $U \in \cT(G/K)$. But the integrand is clearly $A$-linear in $U$, so if we replace
$U$ by $Uf$ for any $f \in A$ the $f$ comes outside the inner product and the sum. Since
$f$ is arbitrary, this means that the integral is always 0 exactly if 
\[
\sum_j g_0(T_{\nabla}(U,W_j),W_j) = 0
\]
for all  $U \in \cT(G/K)$. But the left-hand side is exactly our definition of $\mathrm{trace}(T_\nabla^U)$.
 \end{proof}

Note that the criterion in the theorem is independent of the choice of $J$ (as long as $J$ commutes with $\nabla$).  

\begin{corollary}
\label{cor7.4}
Let $\mu_{\dia} \in \fg'$ and let $G/K$ correspond to the coadjoint orbit of $\mu_{\dia}$.  Let $\fg_{\dia}$ be the Riemannian metric on $G/K$ corresponding to the K\"ahler structure from $\mu_{\dia}$, and let $\nabla^{\dia}$ be its Levi--Civita connection.  Let $D^{\dia}$ be the Dirac operator for  $\nabla^{\dia}$ constructed as in the previous section (since $\nabla^{\dia}$ commutes with $J_{\dia}$), for any character of $K$.   Then $D^{\dia}$ is formally self-adjoint.
\end{corollary}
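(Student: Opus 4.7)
The plan is to apply Theorem \ref{th7.1} directly, and observe that the hypothesis on the torsion is automatically satisfied because $\nabla^{\dia}$ is a Levi--Civita connection.

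First I would verify that all the hypotheses of Theorem \ref{th7.1} hold in the coadjoint-orbit setting. The triple $(G/K, g_{\dia}, J_{\dia})$ is almost Hermitian by the construction in Section \ref{sec1}: $g_{\dia}$ is the $\Ad$-invariant inner product on $\fm$ coming from the K\"ahler structure, and $J_{\dia}$ is the complex structure on $\fm$ orthogonal for $g_{\dia}$ and preserved by $\Ad|_K$. The Levi--Civita connection $\nabla^{\dia}$ is $G$-invariant (as it was built out of $\nabla^c$ plus $G$-invariant $A$-endomorphism terms in Section \ref{sec3}) and is by definition compatible with $g_{\dia}$. Proposition \ref{prop3.4} supplies the remaining hypothesis: $\nabla^{\dia}$ commutes with $J_{\dia}$. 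Thus the Dirac operator $D^{\dia}$ of Definition \ref{def6.7} is defined for any character $\chi$ of $K$, and Theorem \ref{th7.1} applies.

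The crux is then trivial: by definition, the Levi--Civita connection is the unique torsion-free connection compatible with $g_{\dia}$, so $T_{\nabla^{\dia}} \equiv 0$. Consequently $T_{\nabla^{\dia}}^U = 0$ for every $U \in \cT(G/K)$, and in particular $\mathrm{trace}(T_{\nabla^{\dia}}^U) = 0$. Theorem \ref{th7.1} then yields the formal self-adjointness of $D^{\dia}$.

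There is essentially no obstacle; the only thing to be careful about is confirming that the verification of the hypotheses of Theorem \ref{th7.1} goes through cleanly in the coadjoint-orbit case, which amounts to pointing at Section \ref{sec1} for the almost-Hermitian data and at Proposition \ref{prop3.4} for compatibility of $\nabla^{\dia}$ with $J_{\dia}$. No additional computation is required beyond noting that Levi--Civita connections are torsion-free.
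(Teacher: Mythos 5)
Your proof is correct and follows exactly the same route as the paper: apply Theorem \ref{th7.1} and note that the Levi--Civita connection is torsion-free by definition, so the trace condition is trivially satisfied. The additional verification of the hypotheses of Theorem \ref{th7.1}, pointing to Section \ref{sec1} and Proposition \ref{prop3.4}, is a reasonable expansion of what the paper leaves implicit.
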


\begin{proof}
Since the torsion of $\nabla^{\dia}$ is $0$ by definition, application of Theorem~\ref{th7.1} immediately shows that $D^{\dia}$ is formally self-adjoint.
\end{proof}

For any almost-Hermitian $(G/K,g_0,J)$ there is always at least one connection that satisfies the hypotheses of Theorem~\ref{th7.1}, namely the canonical connection $\nabla^c$.  Even though it may not be torsion-free, we have:

\begin{corollary}
\label{cor7.5}
Let $(G/K,g_0,J)$ be almost Hermitian, and let $\nabla^c$ be the canonical connection on $\cT(G/K)$ . Let $D^{\nabla^c}$ be the Dirac operator constructed as in the previous section for $g_0$ using $J$ (since $\nabla^c$ is compatible with $g_0$ and commutes with $J$), for any character of $K$.  Then $D^{\nabla^c}$ is formally self-adjoint.
\end{corollary}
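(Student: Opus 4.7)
The plan is to reduce everything to Theorem \ref{th7.1}: formal self-adjointness of $D^{\nabla^c}$ is equivalent to $\mathrm{trace}(T_{\nabla^c}^U) = 0$ for every $U \in \cT(G/K)$, a statement involving neither $J$, nor spinors, nor the character $\chi$. The whole argument thereby collapses to a computation on the tangent bundle.

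First I would identify the torsion of $\nabla^c$ pointwise. Recall from Section~\ref{sec2} that $\nabla^{ct} = \nabla^c + L^{ct}$ is torsion-free and that $(L^{ct}_V W)(x) = \tfrac{1}{2}P[V(x),W(x)]$. By antisymmetry of the $\fg$-bracket, $(L^{ct}_V W - L^{ct}_W V)(x) = P[V(x),W(x)]$, and hence $T_{\nabla^c}(V,W)(x) = -P[V(x),W(x)]$. This gives
\[
\mathrm{trace}(T_{\nabla^c}^U)(x) \ = \ -\sum_j g_0\bigl(P[U(x),W_j(x)],\,W_j(x)\bigr).
\]

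Next I would apply Proposition~\ref{prop2.9} to replace the biframe $\{(W_j,W_j)\}$ by the biframe $\{(\hat X_j, S^{-1}\hat X_j)\}$ of fundamental vector fields introduced just before Theorem~\ref{th2.10}, where $\{X_j\}$ is a $\Kil$-orthonormal basis of $\fg$. Using $g_0(Y,Z) = \Kil(SY,Z)$ together with the $\Kil$-self-adjointness of $S$, the sum rewrites as $-\sum_j \Kil\bigl(P[U(x),\hat X_j(x)],\,\hat X_j(x)\bigr)$. Since $\hat X_j(x)\in\fm$ is $\Kil$-orthogonal to $\ker P = \fk$, the projection $P$ can then be dropped, leaving $-\sum_j \Kil\bigl([U(x),\hat X_j(x)],\,\hat X_j(x)\bigr)$. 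But $\ad_{U(x)}$ is $\Kil$-skew-adjoint (Section~\ref{sec1}), so each summand vanishes individually. Therefore $\mathrm{trace}(T_{\nabla^c}^U) = 0$, and Theorem~\ref{th7.1} yields the desired formal self-adjointness.

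I do not anticipate a real obstacle; the only care point is that working with $g_0$ directly is awkward (since $S$ need not commute with $\ad_U$, so $\ad_{U(x)}$ is generally not $g_0$-skew-adjoint), while the biframe substitution transports the expression into the Killing form, where skew-adjointness of $\ad$ collapses each term to zero. The role of the biframe from Proposition~\ref{prop2.9} is precisely this change-of-inner-product trick.
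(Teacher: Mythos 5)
Your proof is correct and follows essentially the same strategy as the paper: after identifying $T_{\nabla^c}(V,W)(x)=-P[V(x),W(x)]$, both arguments reduce $\mathrm{trace}(T_{\nabla^c}^U)$ to a sum computed against the Killing form, where $\Kil$-skew-adjointness of $\ad$ makes each term vanish. The only variation is the mechanism of the change of inner product: the paper recognizes the pointwise expression as $\mathrm{trace}(P\circ\ad_{U(x)})$ on $\fm$ and invokes basis-independence of the trace, whereas you substitute the biframe $\{(\hat X_j,S^{-1}\hat X_j)\}$ via Proposition~\ref{prop2.9} and use $\Kil$-self-adjointness of $S$ to cancel $S$ against $S^{-1}$; both steps are sound and accomplish the same thing.
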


\begin{proof}
From section~6 of \cite{R22} (where the canonical connection is denoted by $\nabla^0$) we find that
\[
(T_{\nabla^c}(V,W))(x) = -P[V(x),W(x)].
\]
Thus to apply the criterion of Theorem~\ref{th7.1} we need to show that
\setcounter{equation}{5}
\begin{equation}
\label{eq7.6}
\mathrm{trace}(T^U_{\nabla^c})(x) = \sum_j g_0(P[U(x),W_j(x)],W_j(x)) = 0
\end{equation}
for each $U \in \cT(G/K)$ and $x \in G$.  Now $\{W_j(x)\}$ is a frame for $\fm$ with respect to $g_0$ for each $x$, and by Proposition~\ref{prop2.9} for a given $x$ we can replace $\{W_j(x)\}$ by an orthonormal basis for $\fm$ with respect to $g_0$.  We see in this way that for a given $x$, if we set $Y = U(x)$, then expression \eqref{eq7.6} is simply $\mathrm{trace}(P \circ \ad_Y)$ where $P \circ \ad_Y$ is viewed as an operator on $\fm$.  
But the trace of an operator is independent of any choice of inner product on the vector space. Thus we can instead use a basis, $\{X_j\}$, for $\fm$ that is orthonormal for $\Kil$. Since $P$ is self-adjoint for $\Kil$ on $\fg$, 
the expression \eqref{eq7.6} (for the given $x$) is just
\[
\sum \Kil([Y,X_j],X_j).
\]
But $\ad_Y$ is skew-adjoint for $\Kil$ on $\fg$, and so each term in the sum is 0.
Thus the criterion of Theorem~\ref{th7.1} is fulfilled.
\end{proof}

Suppose now that $(G/K,g_0,J)$ is almost Hermitian and that $\nabla$ is a $G$-invariant connection on $ \cT(G/K)$ that is compatible with $g_0$ and commutes with $J$. As done earlier, set $L = \nabla - \nabla^c$. Then a simple calculation shows that
\[
T_\nabla(V, W) = T_{\nabla^c}(V, W) + L_V W - L_W V,
\]
for $V, W \in  \cT(G/K)$, and so for any $U \in  \cT(G/K)$ we have
\[
\mathrm{trace}(T^U_\nabla) 
= \mathrm{trace}(T^U_{\nabla^c}) + \sum_j g_0(L_U W_j - L_{W_j} U , \ W_j).
\]
But in the proof of corollary \ref{cor7.5} we verified equation \ref{eq7.6}, which says that 
$ \mathrm{trace}(T^U_{\nabla^c}) = 0$. Furthermore, $L_U$ is skew-symmetric, so $g_0(L_U W_j, W_j) = 0$ for each $j$. It follows that
\[
\mathrm{trace}(T^U_\nabla) = -\sum_j g_0(L_{W_j} U, \ W_j) = \sum_j g_0(U, \ L_{W_j} W_j).
\]
Since we need this to be 0 for all $U$, we obtain:

\begin{corollary}
\label{cor7.7}
Let $(G/K,g_0, J)$ be almost Hermitian, and let $\nabla$ be a $G$-invariant connection on $\cT(G/K)$ that is compatible with $g_0$ and commutes with $J$. Let $D^{\nabla}$ be the Dirac operator for $g_0$ and $J$, for a character $\chi$ of $K$.  Let $L = \nabla - \nabla^c$. Then $D^{\nabla}$ is formally self-adjoint if and only if
\[
\sum_j L_{W_j} W_j = 0
\]
for one, hence every, standard module frame for $\cT(G/K)$ and $g_0$.
\end{corollary}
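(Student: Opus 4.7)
The plan is to combine Theorem \ref{th7.1} with a direct identity relating the torsion of $\nabla$ to that of $\nabla^c$. Writing $\nabla = \nabla^c + L$ and using the definition of torsion, the bracket terms $[V,W]$ cancel and one obtains
$$T_\nabla(V,W) = T_{\nabla^c}(V,W) + L_V W - L_W V$$
for all $V,W \in \cT(G/K)$. This is the identity we feed into the criterion $\mathrm{trace}(T_\nabla^U) = 0$ supplied by Theorem \ref{th7.1}.

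Next, I would fix a standard module frame $\{W_j\}$ for $\cT(G/K)$ with respect to $g_0$ and take traces term by term. The contribution from $T_{\nabla^c}$ vanishes by the computation already carried out in the proof of Corollary \ref{cor7.5} (equation \eqref{eq7.6}). The contribution $\sum_j g_0(L_U W_j, W_j)$ vanishes because $L_U$ lies in $\End_A^{sk}(\cT(G/K))$ for $g_0$; this skew-symmetry is exactly the condition from Section \ref{sec6} that ensures $\nabla^c + L$ is compatible with $g_0$. For the remaining term, the same skew-symmetry of each $L_{W_j}$ allows us to rewrite it as
$$-\sum_j g_0(L_{W_j} U, W_j) \;=\; \sum_j g_0\bigl(U,\, L_{W_j} W_j\bigr) \;=\; g_0\Bigl(U,\; \sum_j L_{W_j} W_j\Bigr).$$

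Combining these observations, $\mathrm{trace}(T_\nabla^U) = g_0\bigl(U,\, \sum_j L_{W_j} W_j\bigr)$, and this vanishes for every $U \in \cT(G/K)$ if and only if $\sum_j L_{W_j} W_j = 0$, since $g_0$ is pointwise non-degenerate. The independence of this sum from the choice of standard module frame follows from Proposition \ref{prop2.9} applied to the $A$-bilinear form $(V,W) \mapsto L_V W$. Honestly, there is no serious obstacle in this proof; the argument is essentially bookkeeping, and most of it is already laid out in the paragraph preceding the statement of the corollary. The two points to keep track of are the $g_0$-skew-symmetry of each $L_U$ (built into our setup) and the frame-independence, both handled cleanly by the results of Section \ref{sec2}.
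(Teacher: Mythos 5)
Your proof is correct and follows essentially the same route as the paper: writing $T_\nabla = T_{\nabla^c} + L_V W - L_W V$, invoking Theorem~\ref{th7.1}, killing the $T_{\nabla^c}$ and $g_0(L_U W_j, W_j)$ terms as in Corollary~\ref{cor7.5} and by skew-symmetry, and moving $L_{W_j}$ across $g_0$ to isolate $g_0(U, \sum_j L_{W_j}W_j)$. The only cosmetic difference is that you make explicit the pointwise non-degeneracy of $g_0$ and the frame-independence via Proposition~\ref{prop2.9}, both of which the paper leaves implicit.
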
 

The next results are motivated by the corollary in \cite{Ikd}.

\begin{lemma}
\label{lem7.8}
For $L$ as above, $\sum_j L_{W_j} W_j$ is a constant function on $G$, whose value is in the subspace of $\fm$ consisting of elements that are invariant under the $Ad$-action of $K$ on $\fm$.
\end{lemma}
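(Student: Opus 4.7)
The plan is to exploit two facts already established in the preceding sections: the $G$-equivariance of $L=\nabla-\nabla^c$ (both $\nabla$ and $\nabla^c$ are $G$-invariant, so their difference is), and the frame-independence of the sum $\sum_j L_{W_j}W_j$ guaranteed by Proposition~\ref{prop2.9} (applied with $\b(V,W)=L_VW$, valued in the $A$-module $\cT(G/K)$).

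First I would show that if $\{W_j\}$ is a standard module frame for $\cT(G/K)$ with $g_0$, then so is $\{\lambda_x W_j\}$ for any $x\in G$. This follows from the reproducing identity together with the $G$-invariance of $g_0$, since applying $\lambda_x$ to $V=\sum_j W_j\<W_j,V\>_A$ gives $\lambda_x V=\sum_j(\lambda_x W_j)\<\lambda_x W_j,\lambda_x V\>_A$, and $\lambda_x$ is a bijection of $\cT(G/K)$.

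Next, using the $G$-equivariance $\lambda_x(L_VW)=L_{\lambda_x V}(\lambda_x W)$, I would write
\[
\lambda_x\Bigl(\sum_j L_{W_j}W_j\Bigr)=\sum_j L_{\lambda_x W_j}(\lambda_x W_j).
\]
By Proposition~\ref{prop2.9}, the right-hand side, being the sum computed with respect to the frame $\{\lambda_x W_j\}$, equals $\sum_j L_{W_j}W_j$. Thus the element $V:=\sum_j L_{W_j}W_j\in\cT(G/K)$ satisfies $\lambda_x V=V$ for every $x\in G$.

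Finally, I would translate this $G$-invariance into the statement of the lemma. Since $(\lambda_x V)(y)=V(x^{-1}y)$, invariance under all $\lambda_x$ forces $V$ to be constant as a function $G\to\fm$. Combining this with the defining equivariance $V(xs)=\Ad_s^{-1}(V(x))$ of elements of $\cT(G/K)$, the constant value $V(e)$ must satisfy $\Ad_s(V(e))=V(e)$ for all $s\in K$, i.e.\ it lies in the $\Ad(K)$-fixed subspace $\fm^K$. The only mild subtlety — and really the only substantive step — is verifying that $\{\lambda_x W_j\}$ is again a standard module frame, since everything else is then formal.
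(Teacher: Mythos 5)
Your proof is correct, and for the constancy part it is essentially the same argument as the paper's: apply the $G$-equivariance $\lambda_x(L_VW)=L_{\lambda_x V}(\lambda_x W)$, observe that $\{\lambda_x W_j\}$ is again a standard module frame, and invoke Proposition~\ref{prop2.9}. (The paper asserts that $\{\lambda_x W_j\}$ is a frame without giving the short verification you supply, so your writeup is if anything a bit more complete on this point; it rightly uses the $G$-invariance of $g_0$ so that $\lambda_x\<W_j,V\>_A=\<\lambda_x W_j,\lambda_x V\>_A$.)

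Where you diverge from the paper is in the second assertion, the $\Ad(K)$-invariance of the constant value. The paper proves this by a separate pointwise computation: it invokes the transformation law $L_V(xs)=\Ad_s^{-1}\circ L_V(x)\circ\Ad_s$ from proposition~3.1 of \cite{R22}, notes that $\{\Ad_s(W_j(x))\}$ is another frame for $\fm$, and applies the finite-dimensional frame-independence again. Your argument is shorter and arguably more natural: once $V:=\sum_j L_{W_j}W_j$ is known to be a \emph{constant} element of $\cT(G/K)$, its membership in the module (i.e.\ the equivariance $V(xs)=\Ad_s^{-1}(V(x))$) immediately forces $V(e)=\Ad_s^{-1}(V(e))$ for all $s\in K$, with no further use of Proposition~\ref{prop2.9} and no appeal to the structure of $L$. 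The paper's version has the mild advantage of establishing pointwise $\Ad(K)$-invariance of $V(x)$ at every $x$ without passing through constancy; your version buys economy by letting the first half do all the work. Both are valid.
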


\begin{proof} Because $\nabla$ and $\nabla^c$ are $G$-invariant, so is $L$, where this means that $\l_x(L_W V) = L_{\l_x W}(\l_x V)$, as seen in section 5 of \cite{R22}. Consequently for any $x \in G$
\[
(\sum_j L_{W_j} W_j)(x^{-1}) = (\sum L_{\l_x W_j} (\l_x W_j))(e) ,
\]
where $e$ is the identity element of $G$. But $\{\l_x W_j\}$ is again a standard module frame, and the espression is independent of the choice of standard module frame by Proposition \ref{prop2.9}, so the first statement is verified. For any $x \in G$ and $s \in K$ we have
\begin{eqnarray*}
\Ad_s((\sum L_{W_j})(x)) &=& \Ad_s(\sum L_{W_j}(x)(W_j(x))  \\
&=& \sum (\Ad_s \circ L_{W_j}(x) \circ \Ad_s^{-1})(\Ad_s(W_j(x))) \\
&=& \sum L_{\Ad_s(W_j(x))}(\Ad_s(W_j(x)) , 
\end{eqnarray*}
where we have used proposition 3.1 of \cite{R22}. But again the independence of the
choice of frame shows the invariance under the $\Ad$-action of $K$.
\end{proof}

\begin{corollary}
\label{cor7.9}
Let $G/K$ be the coadjoint orbit for $\mu^\dia \in \fg'$, and let $\nabla$ be any $G$-invariant connection on $\cT(G/K)$ that is compatible with $g_\dia$ and commutes with $J_\dia$. Then for any character $\chi$ of $K$ the Dirac operator $D^\nabla$ on $\cS(G/K, g_\dia,  \chi)$ is formally self-adjoint.
\end{corollary}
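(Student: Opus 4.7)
The plan is to combine Corollary \ref{cor7.7} with Lemma \ref{lem7.8}, and then exploit the very specific structure of $\fk$ and $\fm$ in the coadjoint-orbit setting, where $\fk$ is by definition the centralizer of $Z_\dia$ in $\fg$.

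First I would invoke Corollary \ref{cor7.7}: setting $L = \nabla - \nabla^c$, it suffices to show that $\sum_j L_{W_j} W_j = 0$ for some standard module frame $\{W_j\}$ of $\cT(G/K)$ with respect to $g_\dia$. By Lemma \ref{lem7.8}, this sum is a constant function on $G$ whose value lies in the $\Ad(K)$-invariant subspace $\fm^K$ of $\fm$. So the entire problem reduces to showing that $\fm^K = \{0\}$ in the coadjoint-orbit setting.

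The key step is then the following observation. Since $T_\dia \subseteq K$, every $\Ad(K)$-invariant vector is in particular $\Ad(T_\dia)$-invariant, hence annihilated by $\ad_{Z_\dia}$. But by the discussion in Section \ref{sec1}, we have $\fk = \{Y \in \fg : [Y, Z_\dia] = 0\}$, so any $X \in \fm$ with $\ad_{Z_\dia}(X) = 0$ must lie in $\fk \cap \fm = \{0\}$. Alternatively, this is immediate from Proposition \ref{prop1.4}: on each weight space $\fm_\a$ ($\a \in \D_\dia$) the operator $\ad_{Z_\dia}$ acts as $\a(Z_\dia) J_\dia$ with $\a(Z_\dia) > 0$, hence is invertible, so $\fm$ contains no nonzero $\ad_{Z_\dia}$-fixed vector. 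Therefore $\fm^K = \{0\}$, so $\sum_j L_{W_j} W_j = 0$, and Corollary \ref{cor7.7} concludes the proof.

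There is no real obstacle; the whole content of the argument is the observation that for a coadjoint orbit the central element $Z_\dia \in \ft_\dia \subseteq \fk$ already forces the $\Ad(K)$-fixed subspace of $\fm$ to vanish, so the constant appearing in Lemma \ref{lem7.8} has nowhere to live.
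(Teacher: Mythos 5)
Your proof is correct and follows the paper's strategy: invoke Corollary \ref{cor7.7} and Lemma \ref{lem7.8} to reduce everything to showing that the $\Ad(K)$-fixed subspace of $\fm$ vanishes. The only (minor) difference is in the last step: the paper simply remarks that $K$ contains a maximal torus of $G$, so the $\Ad(K)$-fixed part of $\fm$ vanishes, whereas you deduce the same thing more directly from the single element $Z_\dia \in \fk$, using $\fk = \ker(\ad_{Z_\dia})$ and $\fk \cap \fm = \{0\}$ (or equivalently the positivity $\a(Z_\dia) > 0$ from Proposition \ref{prop1.4}). Your version is a bit more self-contained and uses only the facts about $T_\dia$ established in Section \ref{sec1}, so it is a perfectly good and arguably slightly more elementary variant of the same argument.
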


\begin{proof}
Because $K$ contains a maximal torus, the only element of $\fm$ that is invariant for the $\Ad$-action of $K$ is 0.
\end{proof}

We remark that when $G/K$ can be identified with a coadjoint orbit, there are usually many different coadjoint orbits to which it can be identified, and thus many different complex structures $J$ (and Riemannian metrics) that can be used when applying the above corollary.

From Lemma \ref{lem7.8} we see that the criterion of Corollary \ref{cor7.7} will be satisfied if and only if 
$(\sum_j L_{W_j} W_j)(e) = 0$. Let $\{Y_p\}$ be a $g_0$-orthonormal basis for $\fm$.
Then $\{S^{1/2}Y_p\}$ will be a $\Kil$-orthonormal basis for $\fm$, which we can extend to a $\Kil$-orthonormal basis
$\{X_j\}$ for $\fg$.
Then $\{\hat X_j\}$ is a standard module $\Kil$-biframe for $\cT(G/K)$, and so, as seen just before Theorem \ref{th2.10}, $\{(\hat X_j, S^{-1} \hat X_j)\}$ is a standard module $g_0$-frame for $\cT(G/K)$. By Proposition \ref{prop2.9} the criterion is equivalent to
$
0 = \sum_j L_{\hat X_j}(e)(S^{-1}\hat X_j(e))
$
Now as seen before Theorem \ref{th2.10}, 
$
{\hat X}(x) = -P\Ad_x^{-1}(X)
$
for any $X \in \fg$, so that $\hat X(e) = -PX$. Consequently $S^{-1}\hat X_p(e) = -S^{-1/2} Y_p$ for each $p$. In this way we obtain the following corollary, which is very similar to the criterion that Ikeda obtained for the spin case in the main theorem of \cite{Ikd}:

\begin{corollary}
\label{cor7.10}
Let $(G/K,g_0, J)$ be almost Hermitian, and let $\nabla$ be a $G$-invariant connection on $\cT(G/K)$ that is compatible with $g_0$ and commutes with $J$. Let $D^{\nabla}$ be the Dirac operator for $g_0$ and $J$ on $\cS(G/K, \chi)$ for a character $\chi$ of $K$. Let $L = \nabla - \nabla^c$. Then $D^{\nabla}$ is formally self-adjoint if and only if for one (and so for any) $g_0$-orthonormal basis $\{Y_p\}$ for $\fm$ we have
\[
\sum_p L_{\hat X_p}(e) (S^{-1/2}Y_p) = 0
\] 
where $X_p = S^{1/2}Y_p$ for each $p$.
\end{corollary}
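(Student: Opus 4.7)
The plan is to combine Corollary~\ref{cor7.7}, Lemma~\ref{lem7.8}, and Proposition~\ref{prop2.9}, and then evaluate the resulting identity at $e$ using a biframe built from fundamental vector fields.

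First I would observe that, by Corollary~\ref{cor7.7} together with Lemma~\ref{lem7.8}, formal self-adjointness of $D^\nabla$ is equivalent to the vanishing of $(\sum_j L_{W_j} W_j)(e)$ for some (hence every) standard module frame $\{W_j\}$ for $\cT(G/K)$ with respect to $g_0$. Since $L\colon\cT(G/K)\to\End_A(\cT(G/K))$ is $A$-linear, the map $(V,W)\mapsto L_V W$ is $A$-bilinear, and any standard module frame $\{W_j\}$ yields the biframe $\{(W_j,W_j)\}$. Hence Proposition~\ref{prop2.9} implies that $\sum_j L_{W_j} W_j = \sum_j L_{U_j}\widetilde U_j$ for every biframe $\{(U_j,\widetilde U_j)\}$ of $\cT(G/K)$ with respect to $g_0$.

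The key substitution is then the biframe $\{(\hat X_j, S^{-1}\hat X_j)\}$, which is a $g_0$-biframe as recalled just before Theorem~\ref{th2.10}, where $\{X_j\}$ is the $\Kil$-orthonormal basis of $\fg$ obtained by extending $\{X_p = S^{1/2}Y_p\}\subset\fm$ by any $\Kil$-orthonormal basis of $\fk$. Feeding this biframe into the criterion rewrites the condition as
\[
0 \ = \ \sum_j L_{\hat X_j}(e)\bigl(S^{-1}\hat X_j(e)\bigr).
\]

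To finish, I would invoke $\hat X(e) = -PX$: for $X_j\in\fk$ we have $\hat X_j(e)=0$ and that term vanishes, while for $X_p=S^{1/2}Y_p\in\fm$ we have $\hat X_p(e)=-S^{1/2}Y_p$ and hence $S^{-1}\hat X_p(e)=-S^{-1/2}Y_p$. Using $(L_V W)(e)=L_V(e)(W(e))$ together with linearity of the operator $L_{\hat X_p}(e)$, the sum collapses to $-\sum_p L_{\hat X_p}(e)(S^{-1/2}Y_p)$, whose vanishing is the stated criterion; independence of the choice of $\{Y_p\}$ is built in through Proposition~\ref{prop2.9}. The one subtle step---and really the only obstacle---is the passage from the symmetric frame sum $\sum_j L_{W_j}W_j$ to the asymmetric biframe sum involving the $S^{-1}$ factor; this is exactly what Proposition~\ref{prop2.9} provides, applied to the $A$-bilinear form $L$.
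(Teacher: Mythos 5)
Your proposal is correct and follows essentially the same path as the paper's argument: combine Corollary~\ref{cor7.7} with Lemma~\ref{lem7.8} to reduce to $(\sum_j L_{W_j}W_j)(e)=0$, invoke Proposition~\ref{prop2.9} to switch to the biframe $\{(\hat X_j, S^{-1}\hat X_j)\}$ with $\{X_j\}$ a $\Kil$-orthonormal basis extending $\{S^{1/2}Y_p\}$, and then evaluate at $e$ using $\hat X(e)=-PX$ to drop the $\fk$ terms and arrive at the stated sum. No gaps; this matches the paper's reasoning step for step.
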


For the essential self-adjointness of Dirac operators see, for example, section~9.4 of \cite{GVF} and section~4.1 of \cite{Frd}.

I thank John Lott and Mattai Varghese for independently bringing to my attention the paper \cite{Bsm}. In this paper connections which have non-zero torsion are considered, and in definition 1.9 certain modified Dirac operators are defined, and in theorem 1.10 these modified Dirac operators are shown to be self-adjoint. We can in the same way define self-adjoint modified Dirac operators. Within the setting of Theorem \ref{th7.1} let $\nabla = \nabla^c + L$ as before. Then from the definition of Dirac operators in terms of standard module frames given after Definition \ref{def6.7} we see that
\[
D^\nabla \psi = D^{\nabla^c} \psi + \sum_j \k(W_j) L_{W_j} \psi  .
\]
Let $M$ be the operator defined by $M \psi = \sum_j  \k(W_j) L_{W_j} \psi$. It is clearly a bounded operator on $L^2(G/K, \cS)$. Then on $\cS(G/K, \chi)$ we have $(D^\nabla)^* = (D^{\nabla^c})^* + M^*$. But we saw in Corollary \ref{cor7.5} that $D^{\nabla^c}$ is formally self-adjoint. From this we see that
$D^\nabla - (D^\nabla)^* = M - M^*$. Consequently, if we define the modified Dirac operator by $\tilde D^\nabla = D^\nabla - (1/2)(M-M^*)$, then it is easily seen that $\tilde D^\nabla$ is formally self-adjoint.

I also thank John Lott for bringing to my attention the paper \cite{Goe}. It assumes only that $K$ is a connected subgroup of $G$, and deals only with metrics on $G/K$ that are ``normal'', that is come from $G$-invariant metrics on $\fg$. The connections that are considered, which can have non-zero torsion, are quite similar to those used in \cite{Agr}. In the first two sections $G/K$ is assumed to be spin, and the Dirac operators are self-adjoint, for reasons that appear to be closely related to Corollary \ref{cor7.5}. In the next sections of \cite{Goe} $G/K$ is not assumed to be spin, but this is dealt with by tensoring the spinor representation of $\bC\ell(\fm)$ by suitable unitary representations of $K$. This appears to be related to the ``$\mathrm{spin}_K$'' structures of \cite{BIL}, but I have not explored this technique.

It would be interesting to know how all of the results of our paper relate to Connes' action principle for finding {\em the} Dirac operator from among all of the spectral triples that give a specified Riemannian metric \cite{Cn3}.  (See also theorem~11.2 and section~11.4 of \cite{GVF}.)  Of course, on the face of it Connes' theorem is for $\spin$ manifolds while many homogeneous spaces are not $\spin$.


\def\dbar{\leavevmode\hbox to 0pt{\hskip.2ex 
\accent"16\hss}d}
\providecommand{\bysame}{\leavevmode\hbox 
to3em{\hrulefill}\thinspace}
\providecommand{\MR}{\relax\ifhmode\unskip\space\fi MR } 
\providecommand{\MRhref}[2]{%
\href{http://www.ams.org/mathscinet-getitem?mr=#1}{#2} 
}
\providecommand{\href}[2]{#2}

}   

\end{document}